\newcommand\bse{\begin{subequations}}
\newcommand\ese{\end{subequations}}
\newcommand\be{\begin{equation}}
\newcommand\ee{\end{equation}}
\newcommand\bee{\begin{equation*}}
\newcommand\eee{\end{equation*}}
\newcommand\ba{\begin{align}}
\newcommand\ea{\end{align}}
\newcommand\baa{\begin{align*}}
\newcommand\eaa{\end{align*}}
\newcommand\bi{\begin{itemize}}
\newcommand\ei{\end{itemize}}
\newcommand\ben{\begin{enumerate}}
\newcommand\een{\end{enumerate}}
\newcommand\bpm{\begin{pmatrix}}
\newcommand\epm{\end{pmatrix}}
\newcommand{\R}{\mathbb{R}}
\newcommand{\Z}{\mathbb{Z}}
\newcommand{\e}{\mathrm{e}}
\newcommand{\ind}{\mathbbm{1}}
\newcommand{\Cov}{\mbox{Cov}}
\def\Cov{\mathrm{Cov}}
\def\E{\mathrm{E}}
\newtheorem{thm}{Theorem}
\newtheorem{remark}{Remark}
\newtheorem{lemma}{Lemma}
\newtheorem{proposition}{Proposition}
\theoremstyle{definition}
\theoremstyle{remark}
\def\R{\mbox{$\mathbb R$}}
\def\N{\mbox{$\mathbb N$}}
\def\Z{\mbox{$\mathbb Z$}}
\newcommand{\bd}{\begin{displaymath}}
\newcommand{\ed}{\end{displaymath}}
\newcommand{\bea}{\begin{eqnarray}}
\newcommand{\eea}{\end{eqnarray}}
\newcommand{\bean}{\begin{eqnarray*}}
\newcommand{\eean}{\end{eqnarray*}}
\begin{document}

\setcounter{page}{1}
\begin{frontmatter}
\title{
Gaussian Approximation for  Lag-Window Estimators 
and  the Construction of Confidence Bands for the Spectral Density
}
\runtitle{Gaussian Approximation for Spectral Density Estimators}
\begin{aug}
\author{
	\fnms{Jens-Peter} \snm{Krei{\ss}} \thanksref{a} 
\corref{} 
}
\hspace{-.7em}
\author{
	\fnms{Anne} \snm{Leucht}  \thanksref{b} 
\corref{} 
}
\hspace{-.7em}
\author{
\fnms{Efstathios } \snm{Paparoditis} \thanksref{c} 
\corref{} 
}

\address[a]{Institut f\"ur Mathematische Stochastik, TU Braunschweig, 38106 Braunschweig, Germany. E-mail:
j.kreiss@tu-braunschweig.de}

\address[b]{Institut f\"ur Statistik, Universit\"at Bamberg, 96052 Bamberg, Germany. E-mail: anne.leucht@uni-bamberg.de}

\address[c]{Cyprus Academy of Sciences, Letters and Arts,  P.O.Box 22554, CY-1522 Nicosia, Cyprus. E-mail:
stathisp@ucy.ac.cy}

\runauthor{Krei{\ss}, Leucht, Paparoditis }
\affiliation{TU Braunschweig, Universit\"at Bamberg, University of Cyprus}
\end{aug}

\begin{abstract}
In this paper we consider the construction of simultaneous confidence bands for the spectral density
of a stationary time series using a Gaussian approximation for classical lag-window spectral density estimators evaluated
at the set of all positive Fourier frequencies. The Gaussian approximation opens up the possibility to verify asymptotic validity of a multiplier bootstrap procedure and, even further, to derive the corresponding rate of convergence. A small simulation study sheds light on the finite sample properties of this bootstrap proposal. 
\end{abstract}

\begin{keyword}[class=MSC]
\kwd[Primary ]{62G20}
\kwd[; secondary ]{62G09, 62G15}
\end{keyword}

\begin{keyword}
  \kwd{Bootstrap}\kwd{Confidence Bands}\kwd{Gaussian Approximation} \kwd{Sample Autocovariance}  \kwd{Spectral Density} 
\end{keyword}


%
%

\end{frontmatter}

\section{Introduction}\label{Section_Introduction}

We consider the problem of constructing
(simultaneous)
 confidence bands  for the spectral density of a stationary time series.  Toward this goal we  develop   Gaussian approximation results  for  the maximum deviation over all (positive) Fourier frequencies of a lag-window spectral density estimator and for its bootstrap counterpart.  Based on observations $X_1, X_2, \ldots , X_T$ stemming from a strictly stationary and centered stochastic process  $\{X_t,t\in\Z\}$, a lag-window  estimator of the spectral density $ f(\lambda)$, $\lambda\in [0,\pi]$,  is given by  
\begin{equation}
\label{spec_dens}
\widehat f_T(\lambda) =\frac{1}{2\pi} \sum _{\vert j\vert \le M_T} w(j/M_T)\, \e ^{-i j \lambda}\, \widehat \gamma (j), 
~ \lambda \in [0,\pi ].
\end{equation}
In \eqref{spec_dens}  and for $j=0, 1, 2, \ldots, M_T<T$,
\begin{equation} \label{eq.gamma-hat}
\widehat \gamma (j) = \frac{1}{T} \sum _{t=j+1}^{T} X_t\,X_{t-j}
\end{equation}
 are  estimators of the autocovariances $ \gamma(j)=\Cov(X_0,X_j)$ of the process $\{X_t,t\in\Z\}$ and $\widehat{\gamma}(j)=\widehat{\gamma}(-j)$ for $ j<0$.  The function  $w:[-1,1] \rightarrow {\mathbb R}$ is a so-called lag-window,
 which assigns weights to the $M_T$ sample autocovariances effectively 
used in  the calculation of the estimator for  
$ f(\lambda)$ and which satisfies some assumptions to be specified later.

It is well-known that for a fixed frequency $\lambda$, under suitable assumptions on the dependence structure of the underlying time series and for $M_T$ 
converging not too fast to infinity, asymptotic normality for lag-window estimators
can be shown. To elaborate and assuming sufficient smoothness of $f$,
which is equivalent to assuming a sufficiently fast decay of the underlying autocovariances $\gamma (h)$ as $|h|\to \infty$,
one typically can show under certain conditions on the lag-window $w$ and for $M_T\to \infty $ such that $T/M_T^5 \to C^2\ge 0$ as $n\to \infty$, that
\begin{equation}
\label{spec_dens_normal}
\sqrt{\frac{T}{M_T}}\Big( \widehat f_T(\lambda ) -f(\lambda )\Big) \xrightarrow{\mathcal D} 
{\mathcal N} \Big( C\, W \, f^{\prime \prime}(\lambda ),\, f^2(\lambda ) \int _{-1}^1 w^2(u)\, du\Big),
\end{equation}
 for $0<  \lambda <\pi $, where $f^{\prime\prime}$ denotes the second derivative of $f$ and $W=\lim_{u\rightarrow 0}(1-w(u))/u^2$, where the latter is assumed to exist and to be positive.   

Moreover, for the estimator $\widehat f_T(\lambda)$ with sophisticated selected so-called flat-top lag-windows
$w$ and under the additional assumption that $ \sum _{j\in \mathbb{Z}} |j|^r |\gamma (j)| < \infty $ for some $r\ge 1$
convergence rates for the mean squared error $\mbox{MSE}(\widehat f_T(\lambda)) = O(T^{-2r/(2r+1)})$ can 
be achieved, which  corresponds to $M_T\sim T^{1/(2r+1)}$. See  \citeasnoun{PolitisRomano87} and \citeasnoun{BergPolitis2009} for details.

Instead of point-wise inference we aim in this paper for simultaneous 
inference using  the  estimator $\widehat{f}_T$. 
More precisely, we aim for confidence bands covering
the spectral density uniformly at all
positive Fourier frequencies $\lambda _{k,T} = 2\pi k/T, k=1, \ldots , \lfloor T/2 \rfloor =:N_T$ with a desired (high)  probability. Hence, we keep all  information contained  in $X_1,\dots, X_T$ in the sense that the discrete Fourier transform of any vector $x\in\R^T$ is a linear combination of trigonometric polynomials at exactly these frequencies.

Note that so far simultaneous confidence bands for the spectral density had only been derived for special cases. In particular, for  autoregressive processes relying on parametric spectral density estimates (see e.g.\;\citeasnoun{NP84} and \citeasnoun{T87})  while  \citeasnoun{NP08} proposed a bootstrap-aided approach for Gaussian time series using the integrated periodogram to construct simultaneous confidence bands for (a smoothed version of) the spectral density. Generalizing results from \citeasnoun{LiuWu2010}, \citeasnoun{YZ22} derived simultaneous confidence bands  for the spectral density on a grid with mesh size wider than $2\pi/T$ and for  locally stationary Bernoulli shifts satisfying a geometric moment contraction condition. To this end, they proved that the maximum deviation of a
suitably standardized lag-window spectral density estimator
over such a grid
asymptotically possesses a Gumbel distribution. Motivated by the slow rate of  convergence of the 
maximum deviation to a Gumbel variable, they propose an asymptotically valid bootstrap method to improve the finite 
sample behavior of their confidence regions.

As mentioned above, we aim to construct simultaneous confidence bands on the finer grid of all (positive) Fourier frequencies. 
To achieve this goal, we establish 
a Gaussian approximation result for the distribution of a properly standardized statistic based on  the random quantity
$$
\max _{1\leq k\leq N_T} \vert \widehat f_T(\lambda_{k,T}) - E \widehat f_T(\lambda_{k,T})\vert .
$$ 
In this context and in order to stabilize the asymptotic variance and to construct confidence bands that automatically 
adapt to the local variability of $f(\lambda)$, we   are particularly  interested  in deriving a Gaussian approximation
for the normalized  statistic
$$
\max _{1\leq k \leq N_T} \frac{\displaystyle \vert \widehat f_T(\lambda_{k,T}) - E\widehat f_T(\lambda_{k,T})\vert }
{\displaystyle \widehat f_T(\lambda_{k,T})}.
$$ 
Various Gaussian approximations of max-type statistics in the time domain have been derived during the last decades. For means of high dimensional time series data we refer the reader to  
	\citeasnoun{ZW17} as well as \citeasnoun{ZC18} and references therein. In the context of i.i.d. functional data, \citeasnoun{CCH22} show that Gaussian approximation results for means of high dimensional vectors can be adapted to establish a Gaussian approximation for the maximum of the periodogram.  To elaborate, they use that the periodogram at frequency $\lambda_{k,T}$ is the squared norm of $\frac{1}{\sqrt{T2\pi}}\sum_{t=1}^T X_t\, e^{-it\lambda_{k,T}}$. However, their arguments do not directly carry over to  lag-window estimators as the latter have a more complex structure.   It is shown in \eqref{eq.Z} below, that they still have a mean-type representation, but the degree of dependence of the summands increases with the sample size which prevents the application of the afore-mentioned results.
  A Gaussian approximation result  for high dimensional spectral density matrices  of $\alpha$-mixing time series has been derived in \citeasnoun{Changetal23}. However, they  require the dimension of the process to increase with sample size making their  results  not applicable for our purposes.

The paper is organized as follows. Section 2 summarizes key assumptions for our results. The core
material on Gaussian approximation for spectral density estimators is contained in Section 3. In Section 4  we discuss 
application of the Gaussian approximation results to the construction of simultaneous confidence bands over the positive Fourier frequencies. Section~5 reports  the results of a small  simulation study. 
Auxiliary lemmas and proofs are deferred to  Section~6.

\section{Set up and Assumptions}\label{Section_Notation}

We begin by imposing the following assumption on the dependence structure of the  stochastic process generating the observed time series $X_1, X_2, \ldots, X_T$.
\vspace*{0.2cm}

\noindent {\bf Assumption 1:}  $ \{ X_t, t\in \Z\}$ is a strictly stationary, centered process and $X_t = g(e_t, e_{t-1}, \ldots ) $ 
for some measurable function $g$ and an i.i.d. process  $\{e_t,t\in\Z\}$  with mean  zero and 
variance $0< \sigma ^2_e<\infty$.   For $s\geq 0$, denote by   $\mathcal F_{r,s}=\sigma(e_r, e_{r-1},\dots, e_{r-s}) $ the 
$\sigma$-algebra generated by the  random variables  
$ \{e_{r-j},  0\leq j \leq s\}$ 
and assume 
that $E\vert X_t\vert ^m < \infty $ for some  $m >16$. For  a random variable $ X$ let  $ \|X\|_m:=(E|X|^m)^{1/m}$ and define  
for an independent copy  $\{e_t^{\prime},t\in\Z\}$ of $\{e_t;t\in\Z\}$,
 \[ 
 \delta _m(k):= \Vert X_t-g(e_t, e_{t-1}, \ldots, e_{t-k+1}, e^{\prime }_{t-k}, e_{t-k-1}, \ldots ) \Vert _m.
 \] 
The assumption is that
\be
\label{delta} 
\delta_m(k) \leq C\, (1+k)^{-\alpha}
\ee
for some  $\alpha >3$
and a constant $C<\infty$.
\vspace*{0.2cm}

Assumption 1 implies that the autocovariance function $ \gamma(h)=\Cov(X_0,X_h),$ $h\in\Z,$ is absolute summable, that is that  the process $ \{X_t,t\in\Z\}$ possesses a continuous and bounded spectral density
$ f(\lambda)= (2\pi)^{-1}\sum_{h\in\Z} \gamma(h) e^{-i h \lambda}$, $\lambda \in (-\pi,\pi]$.\\
It can be even shown 
that \eqref{delta} implies $ \sum _{j\in \mathbb{Z}} |j|^r |\gamma (j)| < \infty $ 
for $r<\alpha -1$, see Lemma~\ref{l.cov} in Section 6.
As has been mentioned in the Introduction this allows for the option of convergence rates for 
$\widehat f_T(\lambda) $ of order $O(T^{-r/(2r+1)})$, $r<\alpha -1$. Note that our assumptions on the decay of the dependence coefficients are less restrictive than those in  \citeasnoun{YZ22}, where exponentially decaying coefficients are presumed.
We assume  for simplicity that $EX_t=0$. Our results can be generalized to non-centered time series using the modified autocovariance estimator 
$\widetilde \gamma (j) = \frac{1}{T} \sum _{t=j+1}^{T} (X_t-\overline{X})(X_{t-j}-\overline {X}) \ \   \mbox{with} \ \   \overline{X}= \sum_{t=1}^T X_t/T$, 
instead of  the estimator $\widehat\gamma(j)$ defined in~\eqref{eq.gamma-hat}.\\

Additional to Assumption 1 we also require the following boundedness condition  for the spectral density~$f$.
\vspace*{0.2cm}

\noindent
{\bf Assumption 2:} The spectral density  $f$ satisfies $ \inf_{\lambda \in (-\pi,\pi]}f(\lambda) >0$. 
\vspace*{0.2cm}

\noindent Finally, we impose the following  conditions on the lag-window function~$w$ used in obtaining the estimator $ \widehat{f}_T$ and which are standard in the literature; see  \citeasnoun{Priestley81}, Chapter 6. 

\vspace*{0.2cm}

\noindent
{\bf Assumption 3:} The lag-window $w\colon[-1,1]\to\R$ is assumed to be a differentiable and symmetric function with  
$\int_{-1}^1 w(u)\, du=1$ and $w(0)=1$.   
\vspace*{0.2cm}
%

\section{Gaussian Approximation  for  Spectral Density Estimators}\label{Section_Results}

For the following considerations and in order to separate  any bias related problems, 
we consider the centered sequence  
\begin{align} \label{eq.Z}
\sqrt{\frac{T}{M_T}} \Big( \widehat f_T(\lambda_{k,T} ) -E\, \widehat f_T(\lambda_{k,T} )\Big) & =\frac{1}{\sqrt{T}}
\sum_{t=1}^T \sum _{j=0}^{M_T}  a_{k,j} \big( X_t X_{t-j} -\gamma (j)\big) \,\ind_{{t>j}}
\nonumber\\
&
=:\frac{1}{\sqrt{T}} \sum _{t=1}^T Z_{t,k},
\end{align}
with an obvious abbreviation for $ Z_{t,k}$ and  where  for $ \ k=1,\ldots , N_T$,
$$
a_{k,j}:= \begin{cases}
     \frac{\displaystyle 1}{\displaystyle 2\pi} \cdot  \frac{\displaystyle 1}{\displaystyle \sqrt{M_T}} &,\; j=0 \\
        \frac{\displaystyle 1}{\displaystyle \pi}  \cdot \frac{\displaystyle 1}{\displaystyle \sqrt{M_T}}w(j/M_T) \cos (j\lambda _{k,T})& ,\; j\ge 1.
\end{cases} 
$$
Due to the  differentiability of the  lag-window $w$ (cf. Assumption~3), we have 
\begin{align*}
\max _{k=1,\ldots , N_T} \sum _{j=0}^{M_T} a_{k,j}^2 
 & \le   \frac{1}{4\pi^2} \frac{1}{M_T} + \frac{1}{\pi^2 M_T} \sum _{j=1}^{M_T} w^2 (j/M_T) \\
&  = \frac{1}{\pi^2} \int_0^1 w^2(u)\, du +{\mathcal O}(M_T^{-1}).
\end{align*}
The  following theorem is our first result and establishes a  valid Gaussian approximation  for  the maximum over all positive Fourier frequencies  of  the  centered estimator  given in (\ref{spec_dens}). 

\begin{thm}\label{t.asymptotics}
Suppose that $\{X_t, t\in \mathbb Z\}$ fulfils  Assumption 1 and 2.  Let 
$\widehat{f}_T$ be a lag-window estimator of $f$ as given  in (\ref{spec_dens}),  where the lag-window $w$ satisfies  Assumption 3 and let 
    $M_T\rightarrow \infty $
and $M_T\sim T^{a_s}$.  Assume  further that 
\bea\label{eq.var_f}
&&\Big\Vert \,\frac{1}{\sqrt{T}}  \sum _{t=1}^T Z_{t,k} \,\Big\Vert _2^2 
>c>0.
\eea\\
Let 
$\xi _k, k=1, \ldots , N_T$, be  jointly normally distributed random variables with zero mean and covariance 
$E \xi_{k_1} \xi_{k_2}$ equal to
$$ 
\frac{1}{T}E\Big[\Big( \sum _{j=0}^{M_T} \sum _{t=j+1}^T  a_{k_1,j} \big( X_t X_{t-j} -\gamma (j)\big) \Big) 
\Big(  \sum _{j=0}^{M_T} \sum _{t=j+1}^T  a_{k_2,j} \big( X_t X_{t-j} -\gamma (j)\big) \Big)\Big].   
$$
Let  $ \lambda $, $ a_s $, and $ a_l  $   be positive constants    such that 
\begin{equation} \label{eq.a_s}
\alpha>\min\Big\{1+\frac{a_l}{2a_s}\,,\,\frac{1-a_l-\frac{12}{m}-4\lambda-\max\{\frac{4}{m},\, 2\lambda\}}{2a_s}+\frac{3}{2}\Big\}
 \end{equation}
and
\begin{equation} \label{eq.all}
0<a_s + \max\{4\lambda, 2\lambda +4/m\}  <  a_l < 1 -6\lambda -12/m.
\end{equation}
Then,   
\begin{align}  \label{thm_gauss}
&\sup _{x\in \mathbb R} \Big\vert P\big( \max _{k=1,\ldots , N_T}  
    \sqrt{\frac{T}{M_T}} \Big\vert \widehat f_T(\lambda_{k,T} )   -E\, \widehat f_T(\lambda_{k,T} )\Big\vert \le x \big)
    -  P\big(    \max _{k=1,\ldots , N_T} \vert \xi_k \vert \le x \big) \Big\vert\nonumber\\
     &  = {\mathcal O}\Big(T^{-\kappa} + T^{-\lambda}\big(\log(N_T)\big)^{3/2}  \Big) , 
\end{align}
where $ \kappa=\min\{\kappa_1,\kappa_2\}$ with 
\[ \kappa_1= a_l/2 -a_s/2-\lambda -\max\{2/m \,,\, \lambda\}  \ \mbox{and} \ \  \kappa_2 =1/2-a_l/2 -6/m -3\lambda.\] 
\end{thm}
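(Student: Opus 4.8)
The plan is to read the left-hand side as a maximum over a high-dimensional index set of standardized partial sums and then to reduce it to a Gaussian approximation for maxima of sums of \emph{independent} random vectors, to which the machinery of Chernozhukov, Chetverikov and Kato applies. Writing $S_{T,k}=T^{-1/2}\sum_{t=1}^T Z_{t,k}$, the statistic of interest is $\max_{1\le k\le N_T}|S_{T,k}|$, i.e.\ a maximum of $2N_T$ linear functionals of the $N_T$-dimensional partial sum. The dimension $N_T\sim T/2$ grows with $T$, but since it enters only through $\log N_T\sim\log T$ this is harmless; the genuine difficulty is that each summand $Z_{t,k}$ depends on innovations $e_s$ reaching back $M_T\to\infty$ steps, so the summands are dependent with a range that grows with the sample size, which is exactly what rules out the direct application of existing high-dimensional approximation results.

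First I would remove the long-range dependence by replacing $X_t=g(e_t,e_{t-1},\ldots)$ with a finitely dependent projection $\widetilde X_t$ measurable with respect to $\mathcal F_{t,L}=\sigma(e_t,\ldots,e_{t-L})$, and correspondingly replacing $Z_{t,k}$ by $\widetilde Z_{t,k}$. Using Assumption 1 and the decay $\delta_m(k)\le C(1+k)^{-\alpha}$, the $L_m$-error in each product $X_tX_{t-j}$ is controlled by the tail $\sum_{k>L}\delta_m(k)$; choosing $L$ polynomially in $T$ and propagating this through the maximum via a maximal inequality (paying a $\log N_T$ factor) bounds the replacement error, and condition \eqref{eq.a_s} on $\alpha$ is precisely what forces this error to be of smaller order than the rate in \eqref{thm_gauss}. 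After projection the summand $\widetilde Z_{t,k}$ depends only on $e_{t-j-L},\ldots,e_t$, so on a window of length at most $M_T+L\sim T^{a_s}$.

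Next I would carry out a big-block/small-block decomposition of $\{1,\ldots,T\}$: big blocks of length $\sim T^{a_l}$ alternating with small buffer blocks whose length exceeds the coupling range $T^{a_s}$. The buffers render the big-block sums independent, while their total length is a vanishing fraction $\sim T^{a_s-a_l}$ of $T$ by \eqref{eq.all}; discarding them costs, after the $T^{-1/2}$ normalisation, a term of order $T^{-(a_l-a_s)/2}$, which together with the maximal-inequality and moment corrections yields the exponent $\kappa_1=a_l/2-a_s/2-\lambda-\max\{2/m,\lambda\}$. This leaves $n\sim T^{1-a_l}$ independent $N_T$-dimensional block sums, to which I would apply the high-dimensional Gaussian approximation for maxima of independent vectors under the purely polynomial moment condition $m>16$. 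This produces a Gaussian vector $G$ with the covariance of the block sums such that the Kolmogorov distance between $\max_k|\cdot|$ of the block sum and of $G$ is $O(T^{-\kappa_2})$, the exponent $\kappa_2=1/2-a_l/2-6/m-3\lambda$ reflecting the number of blocks $n\sim T^{1-a_l}$, the moment order $m$, and the smoothing scale $T^{-\lambda}$ used in the Stein/smoothing step.

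Finally I would replace the covariance of $G$ by the target covariance $E\xi_{k_1}\xi_{k_2}$ of the theorem. The entrywise discrepancy between the two covariance matrices arises only from the projection and the discarded buffers, and a Gaussian comparison inequality of Slepian / Chernozhukov--Chetverikov--Kato type, together with the Nazarov anti-concentration bound which is available because \eqref{eq.var_f} guarantees non-degenerate variances, converts this into an $O\big(T^{-\lambda}(\log N_T)^{3/2}\big)$ bound between $\max_k|G_k|$ and $\max_k|\xi_k|$. Collecting the block-discarding term $\kappa_1$, the block approximation term $\kappa_2$, and the comparison term gives \eqref{thm_gauss}, with \eqref{eq.a_s} and \eqref{eq.all} cut out exactly so that $\kappa_1,\kappa_2>0$ and the block construction is consistent. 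The main obstacle is this simultaneous balancing: the buffers must dominate the growing range $M_T$, the big blocks must be short enough to leave many of them for the central limit effect yet long enough to make the buffers negligible, and every step must be paid for in powers of $1/m$ and $\lambda$ because only polynomial (not sub-Gaussian) moments are assumed, which is precisely why the admissible region is carved out by the delicate inequalities \eqref{eq.a_s}--\eqref{eq.all}.
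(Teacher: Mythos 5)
Your proposal is correct and follows essentially the same route as the paper's proof: an $m$-dependent approximation of $Z_{t,k}$ via projection onto a finite innovation window, a big-block/small-block decomposition with blocks of sizes $T^{a_l}$ and $T^{a_s}$, a Lindeberg-type (smooth-max, third-moment) Gaussian approximation for the resulting independent block sums yielding $\kappa_2$, and a final Gaussian comparison with anti-concentration (where \eqref{eq.var_f} enters) producing the $T^{-\lambda}(\log N_T)^{3/2}$ term, with \eqref{eq.a_s} controlling the projection error through $\alpha$ and \eqref{eq.all} making the block scheme consistent, exactly as in the paper. The only cosmetic difference is that you invoke the Chernozhukov--Chetverikov--Kato machinery as a black box where the paper carries out the Lindeberg swap and the smoothing/de-smoothing step (with the functions $h_{\tau,\tau,x}$ and $t=(1+\log(2N_T))/\tau$, following Zhang et al.) by hand.
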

\medskip

\noindent
Some remarks regarding  Theorem~\ref{t.asymptotics} are in order.

\begin{remark}{~}
	\begin{itemize}
		\item[(i)]
 The lower bound in  \eqref{eq.var_f} is valid for sufficiently large $T$. This results from our assumption that $f$ is bounded away from zero (see Assumption 2), together with 
  $$
 \begin{aligned}
&\sup_{k\in\{1,\dots, N_T \}} \Big|\big\Vert \frac{1}{\sqrt{T}}  \sum _{t=1}^T Z_{t,k} \big\Vert_2^2 - f^2(\lambda _{k,T}) \,\frac{1}{  M_T} 	\sum_{j=-M_T }^{M_T } 
w^2\big(\frac{j}{M_T}\big) \,(1+\cos(2j\,\lambda_{k,T}))  \Big|\\
&=o(1),
\end{aligned}
 $$
 see Lemma~\ref{l.inequalities} in Section 6.
 \item[(ii)] In order to verify that the max-statistic of interest can be approximated by the maximum of absolute values of Gaussian random variables inheriting the covariance structure of lag-window estimators, we combine smoothing techniques, Lindeberg's method, and a blocking approach. The parameter $\lambda$ appearing in Theorem~\ref{t.asymptotics} is related to the smoothness of the functions used in the proof to approximate indicator functions. The parameters $a_l$ and $a_s$ determine the sizes of the big and small blocks. The sizes of these blocks are tailor-made to handle the growing degree of dependence in the $Z_{t,k}'$s with increasing sample size.
 \end{itemize}
\end{remark}
\medskip

\begin{remark} \label{r.1}
{~}
\begin{itemize}
\item[(i)]
Notice first that the rate in the Gaussian approximation \eqref{thm_gauss} does not depend on the rate of decay of the dependence coefficients. Moreover, as can  be seen by an inspection of the proof of Theorem 1, even if the underlying time series $\{X_t, t \in \Z\}$ 
consists of i.i.d.~observations, i.e. $X_t=e_t$ for all $t$, we do not achieve better approximation rates using our method of proof. 
\item[(ii)] If we assume that the number $m$ of moments assumed to exist and the convergence rate for the underlying lag-window estimator $a _s=1/(2r+1),\; r\in \mathbb N,$  are fixed, then we need to optimize the rate in the Gaussian
approximation depending on $\lambda $ and $\alpha _l$. To do so we first ignore log-terms and secondly we
divide the consideration into two cases according to the parameter $m$,
namely Case I: $2/m\ge \lambda $ (moderate values of $m$) and Case II: $2/m\le \lambda $ (large values of $m$).
For  Case I we can find the resulting rate, which is the minimum of 
$\max \{ -\lambda, \lambda +2/m+(a_s-a_l)/2, 3\lambda +6/m+(a_l-1)/2\}$ by balancing the three terms.
This leads to $a_l=1/3+2/3\,a_s-4/(3m)$, $\lambda = 1/12(1-a_s)-4/(3m)$ and the resulting rate
$T^{-r/(6(2r+1))+4/(3m)}$.\\
Exactly along the same lines we obtain for Case II  that  $\lambda = 1/14(1-a_s)-6/(7m)$ and the resulting rate $T^{-r/(7(2r+1))+6/(7m)}$. Both choices are in line with \eqref{eq.all}. \\
Hence, for usual lag-window estimators, i.e. $a _s=1/5$, and
if sufficiently high moments of the time series are assumed to exist, then we can achieve rates up to $ T^{-2/35}$
for the Gaussian approximation.\\
In the limit $a_s\to 0$, we achieve the rate
$T^{-1/14+6/(7m)}$, which reaches, if sufficiently high moments exist, almost  the rate $T^{-1/14}$.
\item[(iii)] If we would instead of sample autocovariances consider sample means of i.i.d. observations, which would make
the small blocks/large blocks considerations in the proof of Theorem 1 superfluous, we could achieve with the presented 
method of proof a Gaussian approximation with rate $T^{-1/8+3/(2m)}$.  
\end{itemize}

\end{remark}

\section{Confidence Bands for the Spectral Density} \label{sec.ConfBand}

To construct confidence bands that appropriately take into account the  local variability of the  lag-window estimator, we make use of the following   Gaussian approximation result  for the   properly standardized lag-window estimators.  

\begin{thm}\label{t.conf-bands}
Suppose that the assumptions of Theorem~\ref{t.asymptotics} are satisfied. Further, assume that
$M_T^3/T \to 0$,
where  additionally to the conditions given in (\ref{eq.a_s}), 
\be\label{eq.thm2}
\lambda+\kappa\leq \min\{1/2-a_s/2-4/m\,,\, 2a_s-4/m\}
\ee
holds true.  
Moreover, we assume for the bias of $ \widehat f_T$ that
\be\label{eq.thm2a}
E \widehat f_T (\lambda)- f(\lambda) = {\mathcal O}(M_T^{-2})
\ee
uniformly in $\lambda $.
Then, 
\begin{align}
		&\sup _{x\in \mathbb R} \Big\vert P\Big( \max _{k=1,\ldots , N_T} 
		\sqrt{\frac{T}{M_T}}  \frac{\vert \widehat f_T(\lambda_{k,T})   - E\widehat f_T(\lambda_{k,T})\vert }{\widehat f_T(\lambda_{k,T})} \le x \Big)-  P\Big(   \max _{k=1,\ldots , N_T} \vert \widetilde\xi_k \vert \le x \Big) \Big\vert \nonumber \\
		  = & \, {\mathcal O}\Big(T^{-\kappa} + T^{-\lambda}\big(\log(N_T)\big)^{3/2}  \Big) ,\nonumber 
	\end{align}
 where $\widetilde\xi _k, k=1, \ldots , N_T$, are jointly normally distributed random variables with zero mean and covariance 
 $E \widetilde \xi_{k_1} \widetilde \xi_{k_2}$ equal to
 \begin{align}\label{eq.Ck1k2}
 C_T(k_1,k_2)= \frac{1}{f(\lambda_{k_1,T})f(\lambda_{k_2,T}) }  & \sum _{j_1,j_2=0}^{M_T} a_{k_1,j_1} a_{k_2,j_2} \nonumber  \\ 
 \times  \frac{1}{T}\sum _{t=j_1+1}^T  \sum _{s=j_2+1}^T 
 & E\Big[\big( X_t X_{t-j_1} -\gamma (j_1)\big) 
 	 \big( X_s X_{s-j_2} -\gamma (j_2)\big)\Big].  
 \end{align}
\end{thm}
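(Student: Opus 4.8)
The plan is to reduce the self-normalized statistic to the statistic already treated in Theorem~\ref{t.asymptotics} by replacing the random denominator $\widehat f_T(\lambda_{k,T})$ with the deterministic spectral density $f(\lambda_{k,T})$, and then to quantify the resulting error through a Gaussian anti-concentration argument. Write $D_k := \widehat f_T(\lambda_{k,T}) - E\widehat f_T(\lambda_{k,T})$ and $B_k := E\widehat f_T(\lambda_{k,T}) - f(\lambda_{k,T})$, so that $\widehat f_T(\lambda_{k,T}) = f(\lambda_{k,T}) + D_k + B_k$ and, by \eqref{eq.Z}, $\sqrt{T/M_T}\,D_k = T^{-1/2}\sum_{t=1}^T Z_{t,k}$. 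The first step, which is essentially bookkeeping, is to establish the Gaussian approximation for the \emph{oracle} statistic $\max_k \sqrt{T/M_T}\,|D_k|/f(\lambda_{k,T})$. Since this equals $\max_k\big|T^{-1/2}\sum_t \widetilde Z_{t,k}\big|$, where $\widetilde Z_{t,k}$ is obtained from $Z_{t,k}$ by replacing $a_{k,j}$ with $\widetilde a_{k,j} := a_{k,j}/f(\lambda_{k,T})$, and since $f$ is continuous and bounded away from zero (Assumptions~1 and~2), the reweighted coefficients satisfy the same boundedness properties as the $a_{k,j}$ and the variance lower bound \eqref{eq.var_f} transfers. Hence the entire proof of Theorem~\ref{t.asymptotics} applies verbatim to $\widetilde a_{k,j}$, yielding
\[
\sup_{x\in\mathbb{R}}\Big|P\Big(\max_k \sqrt{\tfrac{T}{M_T}}\,\tfrac{|D_k|}{f(\lambda_{k,T})}\le x\Big) - P\big(\max_k|\widetilde\xi_k|\le x\big)\Big| = \mathcal{O}\big(T^{-\kappa} + T^{-\lambda}(\log N_T)^{3/2}\big),
\]
where $\widetilde\xi_k$ has precisely the covariance $C_T(k_1,k_2)$ of \eqref{eq.Ck1k2}, because the factor $1/(f(\lambda_{k_1,T})f(\lambda_{k_2,T}))$ is exactly what the reweighting introduces.

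Next I would control the difference between the feasible statistic and the oracle statistic. Since only the denominator differs,
\[
\Big|\max_k \sqrt{\tfrac{T}{M_T}}\,\tfrac{|D_k|}{\widehat f_T(\lambda_{k,T})} - \max_k \sqrt{\tfrac{T}{M_T}}\,\tfrac{|D_k|}{f(\lambda_{k,T})}\Big| \le \max_k \sqrt{\tfrac{T}{M_T}}\,|D_k|\cdot\max_k \frac{\big|\widehat f_T(\lambda_{k,T}) - f(\lambda_{k,T})\big|}{\widehat f_T(\lambda_{k,T})\,f(\lambda_{k,T})}.
\]
Two uniform estimates are required here. Because the summands $X_tX_{t-j}-\gamma(j)$ possess $m/2$ finite moments, a Rosenthal-type moment inequality for the physical-dependence sums $T^{-1/2}\sum_t Z_{t,k}$ (available from Assumption~1, cf. the auxiliary lemmas of Section~6) together with a union bound over the $N_T$ frequencies gives $\max_k\sqrt{T/M_T}\,|D_k| = \mathcal{O}_P(N_T^{2/m}) = \mathcal{O}_P(T^{2/m})$, whence $\max_k|D_k| = \mathcal{O}_P(T^{(a_s-1)/2 + 2/m})$; combined with the bias bound $\max_k|B_k| = \mathcal{O}(M_T^{-2}) = \mathcal{O}(T^{-2a_s})$ from \eqref{eq.thm2a}, this shows $\max_k|\widehat f_T(\lambda_{k,T}) - f(\lambda_{k,T})|\to 0$, so that under $M_T^3/T\to 0$ the denominator stays bounded away from zero uniformly with high probability and the displayed bound is of order $\varepsilon_T := T^{(a_s-1)/2 + 4/m} + T^{2/m-2a_s}$.

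Finally I would convert this high-probability closeness into a bound on the Kolmogorov distance via anti-concentration. Because the variances $C_T(k,k)$ are bounded above and away from zero, the Gaussian maximum satisfies $\sup_x P\big(\big|\max_k|\widetilde\xi_k| - x\big|\le\varepsilon\big)\le C\varepsilon\sqrt{\log N_T}$. Splitting the event $\{\text{feasible}\le x\}$ through the oracle statistic, then using the oracle Gaussian approximation of the first paragraph and this anti-concentration bound, the contribution of the denominator replacement to the Kolmogorov distance is of order $\varepsilon_T\sqrt{\log N_T}$. Conditions~\eqref{eq.thm2} are exactly what make this negligible: the requirement $\lambda+\kappa\le 1/2-a_s/2-4/m$ dominates the fluctuation term $T^{(a_s-1)/2+4/m}$, while $\lambda+\kappa\le 2a_s-4/m$ dominates the bias term $T^{2/m-2a_s}$, so that $\varepsilon_T\sqrt{\log N_T}\lesssim T^{-(\lambda+\kappa)}\sqrt{\log N_T}$ is absorbed into $T^{-\kappa}+T^{-\lambda}(\log N_T)^{3/2}$. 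Combining the two steps by the triangle inequality then yields the claim with the stated rate.

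I would expect the main obstacle to be the uniform control of the denominator replacement over all $N_T$ Fourier frequencies under only $m$ finite moments: the maximal inequality for $\max_k\sqrt{T/M_T}\,|D_k|$ forces the $T^{2/m}$ factors, and it is precisely the interplay of these with the bias order $M_T^{-2}$ and with the anti-concentration loss $\sqrt{\log N_T}$ that dictates the seemingly technical constraints \eqref{eq.thm2} and $M_T^3/T\to0$. By contrast, the reduction to Theorem~\ref{t.asymptotics} itself is routine once the reweighting $\widetilde a_{k,j}=a_{k,j}/f(\lambda_{k,T})$ is observed to preserve all structural hypotheses.
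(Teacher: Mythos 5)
Your overall route coincides with the paper's own proof: you reduce to the oracle statistic $\max_k\sqrt{T/M_T}\,|D_k|/f(\lambda_{k,T})$, observe that the proof of Theorem~\ref{t.asymptotics} applies verbatim to the reweighted coefficients $a_{k,j}/f(\lambda_{k,T})$ (Assumption~2 lets \eqref{eq.var_f} and \eqref{B-13} transfer, and the reweighting produces exactly the prefactor $1/(f(\lambda_{k_1,T})f(\lambda_{k_2,T}))$ in \eqref{eq.Ck1k2}), and you control the remainder term via the maximal inequality $E\max_k|U_k|\le N_T^{1/r}\max_k\Vert U_k\Vert_r$ combined with the bias condition \eqref{eq.thm2a}; your rate $\varepsilon_T=T^{(a_s-1)/2+4/m}+T^{2/m-2a_s}$ matches the paper's bound on $E\max_k|R_T(\lambda_{k,T})|$, where $R_T$ is the paper's pointwise version of your denominator-replacement error.

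The gap is in your final conversion step. The moment bounds give only $E\max_k|R_T(\lambda_{k,T})|\lesssim\varepsilon_T$, and an $O_P(\varepsilon_T)$ statement does \emph{not} imply $P(\max_k|R_T(\lambda_{k,T})|>\varepsilon_T)\to 0$; so ``anti-concentration at scale $\varepsilon_T$'' produces $C\varepsilon_T\sqrt{\log N_T}+P(\max_k|R_T|>\varepsilon_T)$, whose second term is merely $O(1)$ by Markov. With only first-moment control, balancing the shift $\delta$ in $C\delta\sqrt{\log N_T}+\varepsilon_T/\delta$ yields $\sqrt{\varepsilon_T}\,(\log N_T)^{1/4}$, substantially worse than your claimed $\varepsilon_T\sqrt{\log N_T}$ and not obviously $O(T^{-\kappa})$. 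The paper avoids balancing altogether: it shifts by the \emph{same} $t=(1+\log(2N_T))/\tau\sim T^{-\lambda}\log N_T$ already used in the proof of Theorem~\ref{t.asymptotics}, pays the anti-concentration price $C\,t\,(1+\sqrt{\log N_T}+\sqrt{|\log t|})=O(T^{-\lambda}(\log N_T)^{3/2})$, which is already part of the stated rate, and bounds $P(\max_k|R_T(\lambda_{k,T})|>t)\le C\,(t^{-1}N_T^{4/m}+N_T^{2/m})(\sqrt{M_T/T}+M_T^{-2})=O(T^{-\kappa}/\log(2N_T))$ by Markov. This is also why \eqref{eq.thm2} constrains $\lambda+\kappa$ rather than $\kappa$ alone: the extra $T^{\lambda}$ is precisely the Markov cost $t^{-1}$, which your accounting omits --- in your scheme the appearance of $\lambda$ in \eqref{eq.thm2} would be unmotivated. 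The repair is immediate once the threshold $t$ is adopted, so the defect is localized, but as written the last step does not go through.
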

\medskip

\begin{remark}
{~}
\begin{itemize}
\item[(i)] Choosing $a_s$ MSE optimal, that is $a_s=1/5$,  the corresponding optimal $\lambda=\kappa= 2/35$ (see Remark~\ref{r.1}) satisfy \eqref{eq.thm2}.
\item[(ii)] Under the additional assumption  $\lim _{u\to 0} (1-w(u))/u^2 >0$ for the lag-window, validity of \eqref{eq.thm2a}
is shown in \citeasnoun[Theorem 9.3.3]{A71}.\\
For the case $M_T \sim T^{1/5}$ \citeasnoun{BergPolitis2009} have shown in their Theorem~1 that, under our assumptions,
for  lag-window estimators with a so-called flat-top lag-window $w$ even
$$
\sup_{\lambda} \vert E \widehat f_T (\lambda)- f(\lambda)\vert = o(M_T^{-2})
$$
holds. This implies that the results of Theorem \ref{t.conf-bands} also hold for the important class of flat-top
lag-window estimators introduced in \citeasnoun{PolitisRomano87}.
It is worth mentioning that flat-top lag windows fail to fulfill $\lim _{u\to 0} (1-w(u))/u^2 >0$.
\end{itemize}
\end{remark}
\medskip

\noindent Theorem~\ref{t.conf-bands} motivates  the following multiplier bootstrap procedure to construct   a
 confidence  band for the smoothed spectral density 
$$\widetilde{f}_T(\cdot):=E(\widehat{f}_T(\cdot)) .$$ 

\begin{description}
\item[Step 1.]  For $k_1,k_2 \in \{1,2, \ldots, N_T\}$, let  $ \widehat{C}_T(k_1,k_2)$  be an estimator of the covariance  $C_T(k_1,k_2)$ given in (\ref{eq.Ck1k2}) which ensures that the $N_T\times N_T$ matrix $ \widehat{\Sigma}_{N_T}$  with the $ (i,j)$-th element  equal to   $ \widehat{C}_T(i,j)$ is non-negative definite. 
\item[Step 2.] Generate  random variables $ \xi^\ast_1,\xi^\ast_2, \ldots, \xi^\ast_{N_T}$, where 
\[ \big( \xi^\ast_1,\xi^\ast_2, \ldots, \xi^\ast_{N_T}\big)^\top \sim {\mathcal N}\Big(0_{N_T}, \widehat{\Sigma}_{N_T} \Big),\]
with $ 0_{N_T}$ a $N_T$-dimensional vector or zeros  and covariance matrix $ \widehat{\Sigma}_{N_T} $.
Let 
$\xi^\ast_{\max} =\max\big\{|\xi^\ast_1|,|\xi^\ast_2|, \ldots, |\xi^\ast_{N_T}| \big\}$ and for  $ \alpha \in (0,1)$ given, denote by 
$ q^\ast_{1-\alpha}$ the  upper $(1-\alpha)$ percentage point of the distribution  of $\xi^\ast_{\max}$.
 \item[Step 3.]  A simultaneous $(1-\alpha)$-confidence band for $\widetilde{f}(\lambda_{j,T})$, $j=1,2, \ldots, N_T$, is then given by 
\begin{equation} \label{eq.CBand1}
 \Big\{  \Big[\widehat{f}(\lambda_{j,T}) \Big(1-q^\ast_{1-\alpha}\sqrt{\frac{M_T}{T}}\Big),  \ \  \widehat{f}(\lambda_{j,T}) \Big(1+q^\ast_{1-\alpha}\sqrt{\frac{M_T}{T}}\Big) \Big], \  j=1,2, \ldots , N_T.\Big\}
 \end{equation}
\end{description}

\vspace*{0.2cm}

Notice that the distribution of $ \xi^\ast_{\max} $ in Step 2 can be 
estimated
via Monte-Carlo simulation.  
While $ f(\lambda_{k_i,T})$, $i=1,2$,  appearing in the expression for $ C_T(k_1,k_2)$ can be replaced by the estimator $ \widehat{f}_T(\lambda_{k_i,T})$, the important step in the bootstrap algorithm proposed, is  the estimation of the  covariances 
$$ \sigma_T(j_1,j_2)= \frac{1}{T}\sum _{t=j_1+1}^T  \sum _{s=j_2+1}^T  E\Big[\big( X_t X_{t-j_1} -\gamma (j_1)\big) 
 	 \big( X_s X_{s-j_2} -\gamma (j_2)\big)\Big]  $$
	 appearing in $ C_T(k_1,k_2)$.  To obtain an estimator for this quantity which has  some  desired consistency properties,  we follow \citeasnoun{Zhang_etal2022}. In particular,  let $ K$ be  a kernel function satisfying the following conditions.
\vspace*{0.2cm}

\noindent {\bf Assumption 4:} $ K:\R \to [0,+\infty)$  is symmetric, continuously differentiable and decreasing on $[0,+\infty)$  with $K(0) =1$  and $ \int K(x)\,dx<\infty$. The Fourier transform  of $K$ is integrable and nonnegative on $\R$. 
\vspace*{0.2cm}

\noindent Consider  next the  estimator   
	 \begin{equation} \label{eq.Est-c}
	 \widehat{\sigma}_T(j_1,j_2) =  \frac{1}{T}\sum _{t=j_1+1}^T  \sum _{s=j_2+1}^T K\Big(\frac{t-s}{b_T}\Big)\big( X_t X_{t-j_1} -\widehat{\gamma} (j_1)\big) 
 	 \big( X_s X_{s-j_2} -\widehat{\gamma} (j_2)\big),
	 \end{equation} 
of $ \sigma_T(j_1,j_2)$, where $ b_T >0$ is a bandwidth parameter satisfying 
$b_T\rightarrow \infty$ as $ T\rightarrow\infty$. 
Note that the conditions on the Fourier transform of $K$ in Assumption~4 guarantee positive semi-definiteness  of $\widehat\sigma_T(j_1,j_2)$. This  assumption is satisfied if $K$ is, for instance, the  Gaussian kernel.  

\noindent The following consistency result  for the estimator  proposed in (\ref{eq.Est-c}) can  be established.  
  
\begin{proposition}\label{l.sigma-hat}
	Suppose that Assumption~1  and Assumption~4 hold true.  Let $ b_T\sim T^c$ with 
	$ c < 1/2 - 8a_s/m$, where $a_s$
	is as in Theorem~\ref{t.conf-bands}. 
	Then,
\[ 
\sup_{1\leq j_1,j_2\leq M_T} \big |\widehat{\sigma}_T(j_1,j_2) - \sigma_T(j_1,j_2)\big| =
O_P\Big(
\frac{1}{b_T} + \frac{b_T M_T^{8/m}}{\sqrt{T}}\Big).
\]
\end{proposition}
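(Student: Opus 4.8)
\emph{Proof plan.} Abbreviate $Y_t^{(j)}:=X_tX_{t-j}-\gamma(j)$ and introduce the oracle version of the estimator that uses the true autocovariances,
\[
\widetilde{\sigma}_T(j_1,j_2):=\frac{1}{T}\sum_{t=j_1+1}^T\sum_{s=j_2+1}^T K\Big(\frac{t-s}{b_T}\Big)Y_t^{(j_1)}Y_s^{(j_2)}.
\]
The plan is to split $\widehat{\sigma}_T-\sigma_T=A+B+C$ with $A:=\widehat{\sigma}_T-\widetilde{\sigma}_T$ (error from plugging in $\widehat{\gamma}$), $B:=\widetilde{\sigma}_T-E\widetilde{\sigma}_T$ (stochastic fluctuation) and $C:=E\widetilde{\sigma}_T-\sigma_T$ (smoothing bias), and to bound each term uniformly over $1\le j_1,j_2\le M_T$. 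The bias term $C$ will produce the $1/b_T$ contribution and the stochastic term $B$ the $b_TM_T^{8/m}/\sqrt T$ contribution, while $A$ will be shown to be of strictly smaller order.

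For the deterministic bias $C$, reindexing by $h=t-s$ and using $K(0)=1$ together with the Lipschitz property of $K$ (Assumption~4), I would bound $|K(h/b_T)-1|\le L|h|/b_T$ for $|h|\le b_T$ and $|K(h/b_T)-1|\le 2$ otherwise, so that
\[
\sup_{j_1,j_2}|C|\le \frac{L}{b_T}\sup_{j_1,j_2}\sum_{h}|h|\,\big|E\big[Y_t^{(j_1)}Y_{t-h}^{(j_2)}\big]\big|+2\sup_{j_1,j_2}\sum_{|h|>b_T}\big|E\big[Y_t^{(j_1)}Y_{t-h}^{(j_2)}\big]\big|.
\]
It then remains to establish that the fourth-order covariances of the product process are first-moment summable, uniformly in the lags $j_1,j_2$; this follows from the functional dependence bound $\delta_m(k)\le C(1+k)^{-\alpha}$ with $\alpha>3$ by the same type of cumulant estimates used in Lemma~\ref{l.cov} and Lemma~\ref{l.inequalities}. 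Both terms are then $O(1/b_T)$.

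The stochastic term $B$ is the crux. I would reindex $T\widetilde{\sigma}_T=\sum_hK(h/b_T)U_h^{(j_1,j_2)}$ with $U_h^{(j_1,j_2)}:=\sum_tY_t^{(j_1)}Y_{t-h}^{(j_2)}$; for fixed $h,j_1,j_2$ this is a sum of $T$ terms of a strictly stationary sequence whose summands lie in $L^{m/4}$ by H\"older (since $X_t\in L^m$ and $m>16$ give $\|Y_t^{(j)}\|_{m/2}<\infty$ uniformly in $j$, hence $\|Y_t^{(j_1)}Y_{t-h}^{(j_2)}\|_{m/4}<\infty$). The key step is a Rosenthal/Burkholder-type moment inequality for weakly dependent sums, applied through the functional dependence measure of the product sequence, yielding $\|U_h^{(j_1,j_2)}-EU_h^{(j_1,j_2)}\|_{m/4}\le C\sqrt T$ uniformly in $h,j_1,j_2$ (the $\sqrt T$ term dominates the $T^{4/m}$ term because $m>16$). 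Summing over $h$ with $\sum_hK(h/b_T)=O(b_T)$ (as $K\ge 0$ is decreasing with $\int K<\infty$) gives $\|B\|_{m/4}=O(b_T/\sqrt T)$ uniformly in $j_1,j_2$. A union bound over the $M_T^2$ lag pairs,
\[
\Big\|\sup_{j_1,j_2}|B|\Big\|_{m/4}\le\Big(\sum_{j_1,j_2}\|B\|_{m/4}^{m/4}\Big)^{4/m}\le M_T^{8/m}\,O\big(b_T/\sqrt T\big),
\]
followed by Markov's inequality then yields $\sup_{j_1,j_2}|B|=O_P(b_TM_T^{8/m}/\sqrt T)$. I expect the uniform-in-lags moment bound on $U_h^{(j_1,j_2)}$ to be the main obstacle, since it requires transferring the dependence structure of $\{X_t\}$ to products of shifted copies and controlling the Rosenthal constant uniformly across all pairs.

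Finally, for $A$ I would expand $X_tX_{t-j_i}-\widehat{\gamma}(j_i)=Y_t^{(j_i)}-(\widehat{\gamma}(j_i)-\gamma(j_i))$, so that every resulting summand carries at least one factor $\widehat{\gamma}(j_i)-\gamma(j_i)$ together with a kernel mass of size $O(b_T)$. Using the uniform rate $\sup_{0\le j\le M_T}|\widehat{\gamma}(j)-\gamma(j)|=O_P(M_T^{2/m}/\sqrt T)$ (a pointwise $L^{m/2}$ bound combined with a union bound over the $M_T+1$ lags) and that the signed averages $T^{-1}\sum_tY_t^{(j)}$ inherit this rate, one obtains $\sup_{j_1,j_2}|A|=O_P(b_TM_T^{4/m}/T)$, which is smaller than $B$ by the vanishing factor $M_T^{-4/m}T^{-1/2}$. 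Collecting the three bounds gives the asserted rate; the balance between $C$ and $B$ is exactly what forces the bandwidth restriction, since with $b_T\sim T^c$ and $M_T\sim T^{a_s}$ the stochastic term equals $T^{\,c+8a_s/m-1/2}$, which vanishes precisely when $c<1/2-8a_s/m$.
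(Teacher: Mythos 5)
Your proposal follows essentially the same route as the paper's proof: the same three-way decomposition into plug-in error, smoothing bias, and stochastic fluctuation; the same $O(1/b_T)$ bias bound via the derivative of $K$ near zero plus the tail, combined with the uniform covariance decay $\max_{j_1,j_2}|E(Y_{t,j_1}Y_{s,j_2})|\leq C(1+|t-s|)^{-\alpha}$ from Lemma~\ref{l.cov}(iii); and the same treatment of the fluctuation term by reindexing over the lag $t-s$, a uniform $L^{m/4}$ bound of order $\sqrt{T}$ for the centered product sums (which the paper establishes explicitly through a telescoping conditional-expectation/martingale decomposition in the physical-dependence framework --- precisely the Burkholder--Rosenthal-type step you flag as the main obstacle), the kernel mass $\sum_{\ell}K(\ell/b_T)=O(b_T)$, and the $M_T^{8/m}$ maximum-via-$L^{m/4}$-norm union bound. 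The only cosmetic difference is your slightly sharper bound on the plug-in term $A$, which the paper bounds more crudely by Cauchy--Schwarz as $O(b_T T^{-1/2})$; in both cases that term is dominated by the stated rate.
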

\vspace*{0.3cm}

\noindent Let, as usual, $P^*$ denote the conditional probability given the time series $X_1,\dots, $ $X_T$.  We then have the following result which proves consistency of the multiplier bootstrap procedure proposed.

\begin{thm} \label{th.MPBoot}
Suppose that the  conditions  of Theorem~\ref{t.conf-bands} hold true and that 
$ b_T\sim T^c$, where 
\[ 
0 <
a_s< c < \frac{1}{2} -
a_s \big(1 + \frac{8}{m}\big).
\]
Let    $ \xi_k^\ast$, $k=1,2, \ldots, N_T$, be Gaussian random variables   generated   as in Step 2 of the multiplier bootstrap algorithm with $\widehat{\Sigma}_{N_T}$ the $N_T\times N_T$ matrix  the $ (k_1,k_2)$-th element of which equals  
 \begin{align}\label{eq.HatCk1k2}
 \widehat{C}_T(k_1,k_2)= \frac{1}{\widehat{f}(\lambda_{k_1,T})\widehat{f}(\lambda_{k_2,T}) }  & \sum _{j_1,j_2=0}^{M_T} a_{k_1,j_1} a_{k_2,j_2}  \widehat{\sigma}_T(j_1,j_2),
 \end{align}	
and $  \widehat{\sigma}_T(j_1,j_2)$ given in (\ref{eq.Est-c}).
Then, as $ n \rightarrow \infty$,
\begin{align} \label{eq.MPBootBound} 
 \sup _{x\in \mathbb R} \Big\vert P\big( \max _{k=1,\ldots , N_T} &
		\sqrt{\frac{T}{M_T}}  \frac{\vert \widehat f_T(\lambda_{k,T})    - E\widehat f_T(\lambda_{k,T})\vert }{\widehat f_T(\lambda_{k,T})} \le x \big) 
			 -  P^\ast\big(   \max _{k=1,\ldots , N_T} \vert \xi^\ast_k \vert \le x \big) \Big\vert  \nonumber \\
			 = &\,  {\mathcal O}\Big(T^{-\kappa} + T^{-\lambda}\big(\log(N_T)\big)^{3/2}  \Big) \nonumber \\
			 &  + o_P\big(\{\sup_{k=1,\ldots , N_T} |\widehat{f}_T(\lambda_{k,T})-f(\lambda_{k,T})|\}^{1/6} + T^{-\rho/6}\big)
\end{align}
for $ \rho =\min\{c-
a_s ,1/2-c-
a_s (1+8/m)\}$. 			 
\end{thm}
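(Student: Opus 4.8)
The plan is to insert the bootstrap (estimated-covariance) Gaussian vector between the studentized maximum and the true-covariance Gaussian vector via the triangle inequality, so that Theorem~\ref{t.conf-bands} handles one half and a Gaussian comparison handles the other. Write $S_T := \max_{k=1,\dots,N_T}\sqrt{T/M_T}\,|\widehat f_T(\lambda_{k,T})-E\widehat f_T(\lambda_{k,T})|/\widehat f_T(\lambda_{k,T})$ for the studentized maximum, $\widetilde\xi_k$ for the Gaussian variables with covariance $C_T(k_1,k_2)$ from \eqref{eq.Ck1k2}, and $\xi_k^\ast$ for the bootstrap variables with covariance $\widehat C_T(k_1,k_2)$ from \eqref{eq.HatCk1k2}. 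Then the left-hand side of \eqref{eq.MPBootBound} is at most
$$\sup_x\big|P(S_T\le x)-P(\max_k|\widetilde\xi_k|\le x)\big|+\sup_x\big|P(\max_k|\widetilde\xi_k|\le x)-P^\ast(\max_k|\xi_k^\ast|\le x)\big|.$$
The first summand is exactly the quantity bounded in Theorem~\ref{t.conf-bands}, contributing the deterministic rate $O(T^{-\kappa}+T^{-\lambda}(\log N_T)^{3/2})$, so everything reduces to the second summand, a comparison of two centred Gaussian maxima of absolute values differing only through their covariance matrices.

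For the second summand I would condition on $X_1,\dots,X_T$, so that $\widehat\Sigma_{N_T}$ is a fixed and (by Step~1 together with Assumption~4) non-negative definite matrix, and apply a Gaussian comparison inequality of Chernozhukov--Chetverikov--Kato type. Representing $\max_k|\cdot|$ as a maximum over the $2N_T$ coordinates $(\widetilde\xi_1,-\widetilde\xi_1,\dots,\widetilde\xi_{N_T},-\widetilde\xi_{N_T})$, and invoking the lower variance bound $c>0$ from \eqref{eq.var_f} to supply the needed anti-concentration, such an inequality bounds the second summand by a constant times $\Delta^{1/3}(\log N_T)^{2/3}$, where $\Delta:=\max_{k_1,k_2}|C_T(k_1,k_2)-\widehat C_T(k_1,k_2)|$ is now a (data-dependent) random quantity.

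It then remains to control $\Delta$. Comparing \eqref{eq.Ck1k2} with \eqref{eq.HatCk1k2}, the difference splits into a plug-in error from replacing $f(\lambda_{k_i,T})$ by $\widehat f_T(\lambda_{k_i,T})$ in the denominators, and the estimation error $\widehat\sigma_T(j_1,j_2)-\sigma_T(j_1,j_2)$ in the summands. Since $f$ is bounded away from zero (Assumption~2), so is $\widehat f_T$ with probability tending to one, and using that $\sum_{j_1,j_2}a_{k_1,j_1}a_{k_2,j_2}\sigma_T(j_1,j_2)=O(1)$ uniformly, the denominator error is $O_P(\sup_k|\widehat f_T(\lambda_{k,T})-f(\lambda_{k,T})|)$. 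For the estimation error I would use $\sum_{j=0}^{M_T}|a_{k,j}|=O(\sqrt{M_T})$, hence $\sum_{j_1,j_2}|a_{k_1,j_1}||a_{k_2,j_2}|=O(M_T)$, and combine this with Proposition~\ref{l.sigma-hat}: with $b_T\sim T^c$ and $M_T\sim T^{a_s}$,
$$M_T\cdot O_P\Big(\tfrac{1}{b_T}+\tfrac{b_TM_T^{8/m}}{\sqrt T}\Big)=O_P\big(T^{-(c-a_s)}+T^{-(1/2-c-a_s(1+8/m))}\big)=O_P(T^{-\rho}).$$
The theorem's hypothesis $c<1/2-a_s(1+8/m)$ is strictly stronger than the requirement $c<1/2-8a_s/m$ of Proposition~\ref{l.sigma-hat}, so the proposition applies and both exponents are positive. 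Thus $\Delta=O_P(\sup_k|\widehat f_T(\lambda_{k,T})-f(\lambda_{k,T})|+T^{-\rho})$.

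Finally, because $\Delta$ is $o_P(1)$ at a polynomial rate, the factor $(\log N_T)^{2/3}$ in $\Delta^{1/3}(\log N_T)^{2/3}$ is absorbed by reducing the exponent, i.e. $\Delta^{1/3}(\log N_T)^{2/3}=o_P(\Delta^{1/6})$, and $\Delta^{1/6}\le(\sup_k|\widehat f_T-f|)^{1/6}+T^{-\rho/6}$ by subadditivity of $x\mapsto x^{1/6}$; this yields the stated $o_P$ term. The main obstacle is the control of $\Delta$: one must make the $O(M_T)$ amplification of the covariance-estimation error explicit and verify that the resulting exponent is exactly $\rho$, while simultaneously checking that $\sum_{j_1,j_2}a_{k_1,j_1}a_{k_2,j_2}\sigma_T(j_1,j_2)$ stays bounded so that the denominator error contributes only $\sup_k|\widehat f_T-f|$ and not an extra $M_T$ factor. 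Ensuring the Gaussian comparison is legitimately applicable to the random matrix $\widehat\Sigma_{N_T}$, via conditioning and the non-negative-definiteness guaranteed in Step~1, is the accompanying technical point.
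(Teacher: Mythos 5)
Your proposal is correct and follows essentially the same route as the paper's proof: the same triangle-inequality reduction via Theorem~\ref{t.conf-bands}, the same splitting of $\Delta_T$ into the denominator plug-in error (controlled by $\sum_{j_1,j_2}a_{k_1,j_1}a_{k_2,j_2}\sigma_T(j_1,j_2)=O(1)$, which the paper verifies by identifying it with $E(\xi_{k_1}\xi_{k_2})$) and the $\widehat\sigma_T-\sigma_T$ term amplified by $M_T$ through $\sum_j a_{k,j}=O(\sqrt{M_T})$, with identical exponent arithmetic yielding $\rho$. The only cosmetic difference is the Gaussian comparison inequality invoked (a Chernozhukov--Chetverikov--Kato-type bound versus the paper's Lemma~A.1 of Zhang et al.\ (2022), which requires the diagonal $C_T(k,k)$ bounded above and below), and both are absorbed into the same $o_P(\Delta_T^{1/6})$ conclusion.
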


\begin{remark}
{~}
\begin{itemize}
\item[(i)] So far we have considered the construction of  (simultaneous) confidence bands for  the (smoothed) spectral density $ \widetilde{f}(\lambda_{j,T}) =\E(\widehat{f}_T(\lambda_{j,T})) $ over the
Fourier frequencies  $\lambda_{j,T}$, $ j=1,2, \ldots, N_T$. To extend  the procedure proposed to  one that also delivers  an asymptotically valid  simultaneous confidence band for the spectral density $ f$ itself,  a (uniformly) consistent estimator of the (rescaled)  bias term $ B_T(\lambda_{j,T})=\sqrt{T/M_T}\big(\E(\widehat{f}_T(\lambda_{j,T}) ) -f(\lambda_{j,T})\big)/\widehat{f}_T(\lambda_{j,T})$ for $ j=1,2, \ldots, N_T$, is needed, provided  $ B_T(\lambda_{j,T})$ does not vanish asymptotically; see (\ref{spec_dens_normal}).   If fact, it can be easily seen that a (theoretically) valid confidence band for $ f$ which takes into account   the bias in estimating $f$,  is  given by 
\begin{align} \label{eq.CBand2}
 \Big\{  \Big[\widehat{f}(\lambda_{j,T}) & \Big(1-\sqrt{\frac{M_T}{T}}\big(q^\ast_{1-\alpha}+B_T(\lambda_{j,T}\big)\Big),   \\
 &  \ \  \widehat{f}(\lambda_{j,T}) \Big(1+\sqrt{\frac{M_T}{T}}\big(q^\ast_{1-\alpha}-B_T(\lambda_{j,T}\big)\Big) \Big],  \nonumber   j=1,2, \ldots , N_T\Big\};
 \end{align}
compare to  (\ref{eq.CBand1}). As in many other nonparametric inference problems  too, different  approaches can be considered for this purpose; see 
\citeasnoun{Calonicoetal2018}  
and the references therein for  the cases of nonparametric density and  regression estimation in an i.i.d. set up.   One approach is an explicit bias  correction  that  uses  a plug-in type estimator   of $B_T(\lambda_{j,T})$ based on  the fact that, under certain conditions,  see (\ref{spec_dens_normal}),
$ \sqrt{T/M_T}\big(\E(\widehat{f}_T(\lambda_{j,T}) ) -f(\lambda_{j,T})\big) = 
C W f^{\prime \prime}(\lambda_{j,T}) + o(1)$. 
This   approach requires a consistent (nonparametric) estimator of the second order derivative 
$f^{\prime \prime}$ of the spectral density. A different 
approach proposed in the literature is the so-called 'undersmoothing'.
The idea here  is to make  the bias term $ \sqrt{T/M_T}\big(\E(\widehat{f}_T(\lambda_{j,T}) ) -f(\lambda_{j,T})\big)$  asymptotically negligible. This can be achieved 
  by using  a  truncation lag $M_T$ which  increases  to infinity at a rate  faster than 
the (MSE optimal) rate $ T^{1/5}$.   An alternative approach  is to perform a bias correction by using flat-top kernels. See \citeasnoun{Politis24} for a recent discussion, where also an alternative approach to variance stabilization, respectively, studentization, called the confidence region method,  has been proposed.
Despite the fact that the problem  of properly  incorporating  the (possible)  bias in the construction of (simultaneous) confidence bands for the spectral density $f$ is an interesting one, we do not further pursue this problem in this  paper.
\item[(ii)] Equation \eqref{eq.MPBootBound} contains as part of the convergence rate for the multiplier bootstrap 
the sup-distance of the lag-window estimator to the spectral density over all Fourier frequencies $\lambda _{k,T}$. 
Assuming a geometric rate for the decay of the physical dependence coefficients $\delta_m(k)$, one obtains from
\citeasnoun{WuZaffaroni2018}, Theorem 6, the uniform convergence rate $(M_T \log (M_T)/T)^{1/2}$ over all 
frequencies for lag-window estimators, which dominates the uniform term over all Fourier frequencies 
in \eqref{eq.MPBootBound} and usually converges to zero faster
than the other parts of the bound therein.
\end{itemize}
\end{remark}

\section{Simulations} \label{sec.Sim}

In this section we investigate by means of simulations,    the finite sample performance of the Gaussian approximation  and of the corresponding multiplier bootstrap  procedure proposed for the 
construction of simultaneous confidence bands.   For this purpose, time series  of length $T=  256,\; 512$ and  $1024$  have  been generated from the following three  time series models:
\begin{enumerate}
\item[] \hspace*{-1cm} {\bf Model I:}  \ $X_t=0.8 X_{t-1} + \varepsilon_t$,
\item[]  \hspace*{-1cm} {\bf Model II:} \ $X_t = 1.3X_{t-1} -0.75 X_{t-2} + u_t$ with $ u_t =\varepsilon_t \sqrt{1+0.25 u^2_{t-1}} $,
\item[] \hspace*{-1cm} {\bf Model III:} \  $ X_t=\big(0.4+0.1\varepsilon_{t-1}\big)X_{t-1} + \varepsilon_{t}$.
\end{enumerate}
In all models  the innovations  $\varepsilon_t$ are  chosen to be i.i.d.  standard Gaussian.
The empirical coverage over  $R=500$ repetitions of each model has been calculated for two different nominal  coverages,  $ 90\%$ and $ 95\%$. The  lag-window estimator of the  spectral density  used  has been obtained using   the Parzen lag-window 
and   truncation lag $ M_T$.  The Gauss kernel with different values of the parameter $ b_T$ has been used for obtaining the covariance estimators  $   \widehat{\sigma}_T(j_1,j_2)$ given in~(\ref{eq.Est-c})  and  which are used  for the calculation of the   covariance  matrix
$\widehat{C}_T(k_1,k_2)$;  see equation (\ref{eq.HatCk1k2}).  All bootstrap approximations are based on $B=1,000$ replications. Table 1  presents  empirical coverages as well as  mean lengths of the confidence bands obtained, where the mean lengths are  calculated as 
\[ML:=  2  \sqrt{\frac{M_T}{T}} \frac{1}{N_T} \sum_{j=1}^{N_T}\widehat{f}_T(\lambda_{j,T})\frac{1}{R} \sum_{\ell=1}^{R} q_{1-\alpha}^{\ast,(\ell)},\]
 with $ R$  the number of repetitions and   $ q^{\ast, (\ell)}_{1-\alpha}$ the  upper $(1-\alpha)$ percentage point of the distribution  of $\xi^\ast_{\max}$ obtained in the $ \ell$-th repetition; also see 
 expression  (\ref{eq.CBand1}).   Figure~\ref{fig.AR-ARCH} shows  averaged 90\% and 95\%  confidence bands  obtained using the method proposed in this paper for sample sizes of $T=512$ and $T=1024$ and for Model II.

\begin{center}
\begin{small}
\vspace*{0.3cm}
\begin{tabular}{|lll|ccc|ccc|ccc|}
\hline
  &    & &  \multicolumn{3}{c|}{\bf Model I} &  \multicolumn{3}{c|}{\bf Model II} & \multicolumn{3}{c|}{\bf Model III}   \\
 \hline 
{\bf T=256}    &  & & &  & &  &  &   & &    &  \\
$M_T=10$&  \multicolumn{2}{r|}{ $b_T=$ }  & 1.0& 1.5 &2.0 & 6.5 & 7.0  & 7.5 &  1.0 & 1.5 & 2.0  \\
           &  90\% &Cov & 92.6&  90.6 &  89.2 & 90.8 &  90.0   &  90.0 & 86.0 & 82.8  & 82.2 \\
            &   &ML & 0.56&  0.47&  0.43 & 1.46  & 1.42   &  1.38 & 0.18 & 0.17 & 0.17\\
         &  95\% &Cov & 94.0&  92.4 &  91.4 & 94.4 & 94.0   &  92.8 & 91.0 & 89.4 & 87.6 \\
            &   &ML & 0.63&  0.53 &  0.49 & 1.68  & 1.64   &  1.59 & 0.20 & 0.19 & 0.19 \\
{\bf T=512}    &  & & &  & &  &  &   & &    &  \\
$M_T=14$&  \multicolumn{2}{r|}{ $b_T=$ } &  1.5& 2.0&2.5 & 9.5 & 10.0  & 10.5  &  1.0&  1.5 &  2.0  \\
            &  90\% &Cov & 91.6&  88.8 &  87.6 & 90.4 & 90.2  &  89.8    & 87.8 &  84.2 & 83.0  \\
            &   &ML &   0.41 &  0.37 & 0.36 & 1.05 & 1.03& 1.02     &  0.15& 0.14 & 0.15 \\
         &  95\% &Cov & 94.6&  91.8 &  91.2 &  93.8& 94.0& 93.2    & 92.2&  89.8 & 88.0  \\
            &   &ML & 0.46&  0.42 &  0.40& 1.19  &  1.17 & 1.16   & 0.17 & 0.16 & 0.16 \\
 {\bf T=1024}    &  & & &  & &  &  &   & &    &  \\
 $M_T=18$&  \multicolumn{2}{r|}{ $b_T=$ }  & 2.0& 2.5 & 3.0 & 11.0  & 11.5  & 12.0 & 1.0 & 1.5  & 2.0  \\
           &  90\% &Cov & 92.8  & 90.8   & 88.6    & 90.0   & 89.4   &  89.2  & 89.0  &86.4  & 86.0   \\
            &   &ML & 0.31 &  0.29 &  0.28  & 0.79  &  0.78   &   0.77  & 0.12  &0.12  & 0.12  \\
                   &  95\% &Cov & 96.0 &  94.8    &  94.0  &  94.0 & 94.2   &  93.8 & 94.6     &93.2   & 92.0   \\
            &   &ML & 0.34 &  0.32 &  0.31  & 0.88   &  0.88   &   0.87   & 0.14   & 0.13 & 0.13  \\
\hline                
\end{tabular} 
\end{small}
\end{center}
\vspace*{0.05cm}
 \begin{center}
    \begin{minipage}{10.8cm}
{\bf Table 1.} Empirical coverages (Cov) and mean lengths (ML) of simultaneous confidence bands  (\ref{eq.CBand1})  for different sample sizes and different values of the parameters $M_T$ and $b_T$.
    \end{minipage}
    \end{center}

\vspace*{0.5cm}

\begin{figure}[htbp]
    \begin{center}
         \includegraphics[height=6.7cm,width=10cm]{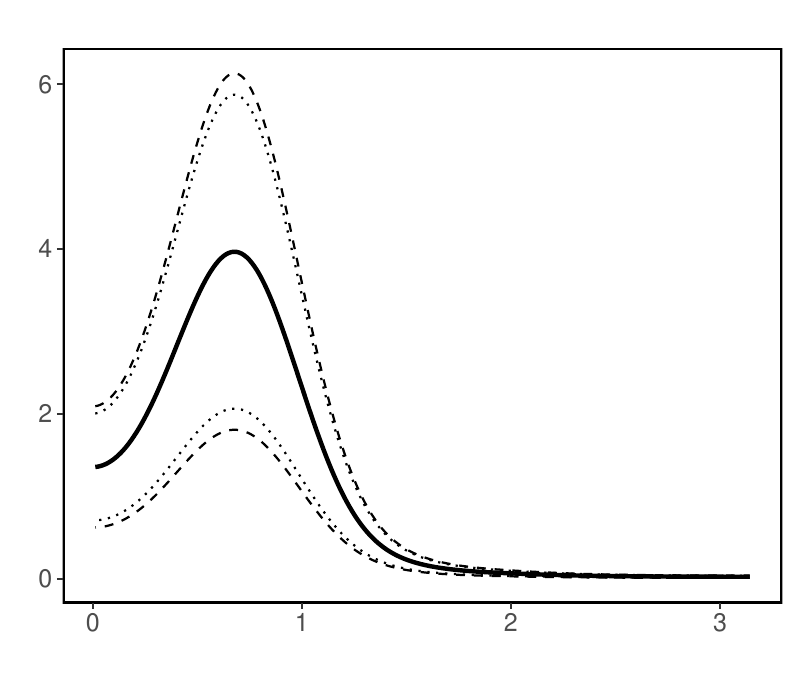} \\  
        \includegraphics[height=6.7cm,width=10cm]{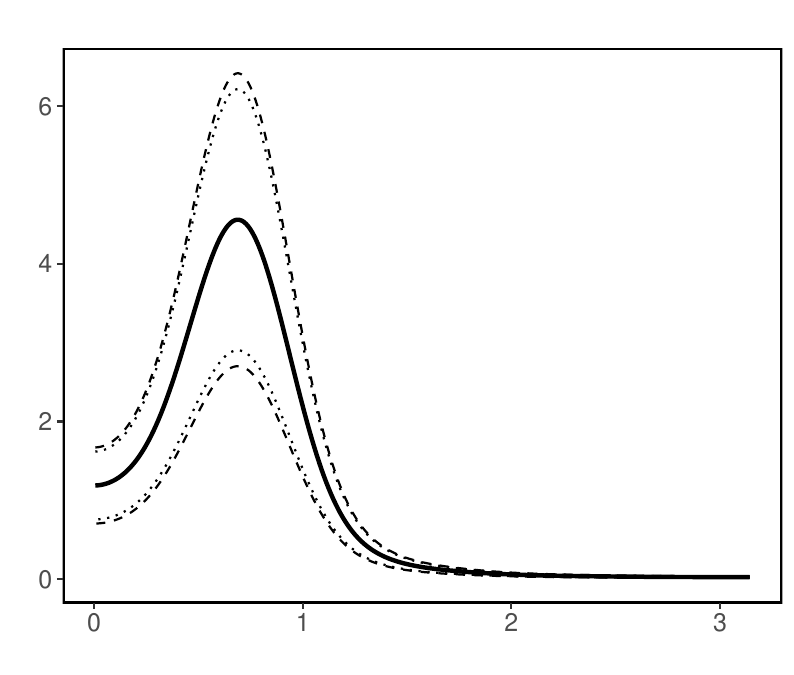} 
      \end{center}
    \vspace{-.4cm}
    \begin{center}
    \begin{minipage}{11cm}
    \vspace*{0cm}
    \caption{Plot of  $\widetilde{f}(\lambda_{j,T}) $ (solid line)  together with   $90\%$  and $95\%$  averaged confidence bands  (dotted and dashed lines, respectively), 
     for  time series of length $T=512$ and $ b_T=10$  (top) and $T=1024$  and $b_T=11.5$ (bottom)   stemming from Model II. \label{fig.AR-ARCH}}
    \end{minipage}
    \end{center}
\vspace{-0.3cm}
\end{figure}

We also compare the performance of the approach proposed in this paper with that based on a Gumbel-type approximation of the  maximum deviation of  the  centered lag-window spectral density estimator  evaluated over  a much coarser grid of frequencies in the interval $(0,\pi]$. To elaborate and   based on Theorems 3-5  of  \citeasnoun{LiuWu2010} derived under different conditions,  an asymptotically $(1-\alpha)$ simultaneous confidence band for  $ \E(\widehat{f}_T(\lambda_s))$ over the   set of frequencies $ \lambda_s= s\pi/M_T$ for $ s=1,2, \ldots, M_T$,  is given by 
\begin{align} \label{eq.ConfBand-Gumbel}
\Big\{ \Big[ \widehat{f}_T(\lambda_s)- \widehat{C}_{\alpha,T}  , \widehat{f}_T(\lambda_s)+\widehat{C}_{\alpha,T}\Big], \ s=1,2, \ldots, M_T\Big\},
\end{align}
where 
\begin{equation} \label{eq.ConfBand-Gumbel2} 
 \widehat{C}_{\alpha,T}=\sqrt{\frac{M_T}{T}\big(c_{1-\alpha} +\mu_T \big) \widehat{f}^2_T(\lambda_s) W_2}, \ \ \mu_T=2\log(M_T)-\log(\pi \log(M_T)) 
 \end{equation}  
and $ W_2=\int_{-1}^1 w^2(u)du = 151/280$  in  the case of the Parzen lag-window.  Furthermore, $ c_{1-\alpha}$ denotes  the  $(1-\alpha)$ percentage point of the standard Gumbel distribution.
Table 2 summarizes  empirical coverages and mean lengths of the confidence bands   (\ref{eq.ConfBand-Gumbel})-(\ref{eq.ConfBand-Gumbel2})  obtained  over $ R=500$ repetitions  for the same models and sample sizes considered in Table 1.  Additionally and 
in order to see the effect of dependence of the time series at hand,   we  also  report results for 
 the case of an  i.i.d. process  with  $ X_t \sim {\mathcal N}(0,1)$. Since  the set of frequencies $\lambda_s$ captured  by  the   confidence band   (\ref{eq.ConfBand-Gumbel})-(\ref{eq.ConfBand-Gumbel2})  solely depends on   the truncation lag $M_T$,  we present results  for   different values of  this parameter.

\begin{center}
\begin{small}
\begin{tabular}{|llc|cc|cc|cc|cc|}
\hline
  &    &  & \multicolumn{2}{c|}{\bf i.i.d.} &  \multicolumn{2}{c|}{\bf Model I} & \multicolumn{2}{c|}{\bf Model II} & \multicolumn{2}{c|}{\bf Model III}  \\ 
  &    &  & 90\% &  95\% & 90\% & 95\%  & 90\% & 95\%  & 90\% & 95\%   \\
  \hline 
  {\bf  T=256} &  &  &  &  &  &   & &   &  &  \\
   & $M_T=10$ & Cov & 84.0& 89.4& 76.2& 84.0 &68.4& 75.8 & 60.2& 65.8 \\
      &  & ML  & 0.12&  0.13& 0.25& 0.28& 0.77& 0.84 & 0.16& 0.18 \\
      & $M_T=14$ & Cov & 79.0& 85.0& 74.8& 80.8 &71.2 & 76.6 & 59.6& 66.0 \\
      &  & ML  & 0.15&  0.16 & 0.32& 0.35& 0.98& 1.07& 0.21 & 0.22  \\
      & $M_T=22$ & Cov & 69.0 & 74.4& 68.2 &74.8 & 65.6 & 71.4 & 55.8& 64.0 \\
      &  & ML  & 0.20&  0.21 & 0.45& 0.49& 1.32& 1.43& 0.28 & 0.30  \\
   {\bf  T=512} &  &  &  &  &  &   & &   &  &  \\
    & $M_T=14$ & Cov & 84.0& 87.8& 81.6& 85.2 & 75.4& 80.0 & 59.0 & 63.8 \\
      &  & ML  & 0.11&  0.12 & 0.23& 0.26& 0.69& 0.76& 0.15 & 0.16  \\
      & $M_T=18$ & Cov & 82.8& 86.6& 82.4& 85.6 & 74.2& 80.2 & 59.4 & 64.6 \\
      &  & ML  & 0.12&  0.14& 0.28& 0.30& 0.82& 0.89 & 0.17& 0.19 \\
      & $M_T=26$ & Cov & 78.6& 84.0& 78.2 &84.0.8 & 71.2 & 72.2 & 58.4& 66.2 \\
      &  & ML  & 0.15&  0.17 & 0.37& 0.39& 1.03& 1.12& 0.22 & 0.24  \\
  {\bf  T=1024} &  &  &  &  &  &   & &   &  &  \\
   & $M_T=18$ & Cov & 88.0& 91.0 & 85.6& 88.6 &76.0& 82.4 & 59.6& 66.2 \\
      &  & ML  & 0.09  & 0.10 & 0.20& 0.22& 0.58& 0.63 & 0.12& 0.13 \\
      & $M_T=22$ & Cov & 85.2 & 90.6 & 83.6& 80.8 &76.2 & 82.6 & 60.2& 67.8 \\
      &  & ML  & 0.10&  0.11 & 0.23 & 0.25& 0.66& 0.72& 0.14 & 0.15  \\
      & $M_T=30$ & Cov & 83.0 & 88.0& 82.2 &86.6 & 75.2 & 81.2 & 62.4& 68.4 \\
      &  & ML  & 0.12&  0.13 & 0.29& 0.31& 0.81& 0.87& 0.17 & 0.18  \\
 \hline                 
\end{tabular} 
\end{small}
\end{center}
\vspace*{0.05cm}
 \begin{center}
    \begin{minipage}{10.8cm}
{\bf Table 2.} Empirical coverages (Cov) and mean lengths (ML) of the simultaneous confidence bands  (\ref{eq.ConfBand-Gumbel})  for different sample sizes and different values of the  truncation-lag  $M_T$.
    \end{minipage}
    \end{center}
    
\vspace*{0.2cm}

\noindent
As it  can be seen from Table~1, the empirical coverages  of our   confidence bands  are,  in general, close to  the desired levels and  they  improve  as the sample size increases. While the  choice of the  parameter  $M_T$, which specifies  the number of empirical autocovariances effectively used  in obtaining $\widehat f_T$,   seems  not  to be   important for  the performance of the method based on Gaussian approximation, this method   is   more sensitive with respect to the choice of the bandwidth parameter $b_T$ used  in the estimation of the covariance matrix of the approximating Gaussian variables. This parameter should be chosen larger for process with a  stronger dependence structure compared to rather weakly dependent data. This is a standard observation in the context of covariance estimation, see e.g.~\citeasnoun{A91}.  Also, differences of the empirical coverages between the  different models can be seen, where  the bilinear Model III  seems to be a rather  difficult case. This model  clearly needs larger sample sizes than the other two models considered in order to obtain coverages which are close to the nominal ones.   An inspection of Table 2 shows that the method based on   the asymptotic Gumbel approximation   and which uses  a much coarser grid of frequencies,  has  difficulties in achieving the desired confidence levels even in the most simple case of a Gaussian i.i.d. process while for Model III this method leads to  quite low empirical coverages even for $T=1024$.  Despite the fact that,   overall,  the  empirical coverages for the i.i.d. case, Model I and Model II,  improve  slowly as the sample size increases, the results  obtained  heavily depend  on the choice of  the truncation lag  $M_T$  and the coverages  achieved  stay  in most cases quite  below the desired level even for the largest sample size used in the simulation study. 

\section{Auxiliary Lemmas and Proofs}

Throughout this section, $C$ denotes a generic constant that may vary from line to line. We first state the following useful lemmas. See also \citeasnoun{XiaoWu2014} for related results to Lemma \ref{l.cov}.

\begin{lemma}\label{l.cov}
Under Assumption 1,
the following assertions hold true:
\begin{enumerate}
\item[(i)] \ $ |\gamma(j)| \leq C(1+|j|)^{-\alpha}$,
\item[(ii)] \
$\sum_{j\in\Z}|j|^r|\gamma(j)|<\infty~\forall r<\alpha-1$,
\item[(iii)] \  $\max_{j_1,j_2\geq 0} \vert E(Y_{t,j_1}Y_{s,j_2}) \vert \leq C(1+|s-t|)^{-\alpha}$, where $ Y_{t,j}=X_{t}X_{t+j}-\gamma (j)$.
\end{enumerate}
\end{lemma}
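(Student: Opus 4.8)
The plan is to prove the three assertions in order, with the fourth-order bound (iii) carrying essentially all of the difficulty. For (i) I would use the martingale/projection decomposition along the innovation filtration. Writing $\mathcal{F}_k=\sigma(e_i:i\le k)$ and $P_k(\cdot)=E(\cdot\mid\mathcal{F}_k)-E(\cdot\mid\mathcal{F}_{k-1})$, the process is mean zero and $\sum_k\delta_2(k)<\infty$ (since $\alpha>1$), so for $j\ge0$ both variables admit the $L_2$-convergent expansions $X_0=\sum_{k\le0}P_kX_0$ and $X_j=\sum_{k\le j}P_kX_j$. Orthogonality of martingale differences kills all off-diagonal terms, leaving $\gamma(j)=\sum_{k\le0}E\big(P_kX_0\,P_kX_j\big)$. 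Using the standard bound $\|P_kX_\ell\|_2\le\delta_2(\ell-k)\le\delta_m(\ell-k)$ together with Cauchy--Schwarz, this is dominated by the convolution $\sum_{i\ge0}\delta_m(i)\,\delta_m(i+j)$. Splitting at $i=j$ and inserting $\delta_m(k)\le C(1+k)^{-\alpha}$ gives $|\gamma(j)|\le C(1+j)^{-\alpha}$; the point worth stressing is that keeping \emph{two} decaying factors in the convolution preserves the full exponent $\alpha$, whereas a crude one-sided coupling bound would only deliver $\alpha-1$.

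Part (ii) is then immediate from (i): $\sum_{j\in\Z}|j|^r|\gamma(j)|\le 2C\sum_{j\ge0}(1+j)^{r-\alpha}$, which converges exactly when $r<\alpha-1$. For (iii) the first step is to reduce the product covariance to a fourth-order cumulant. Setting $T_1=t,\ T_2=t+j_1,\ T_3=s,\ T_4=s+j_2$ and $d=|s-t|$, the product-moment formula for centered variables yields
\[
\Cov(Y_{t,j_1},Y_{s,j_2})=\operatorname{cum}(X_{T_1},X_{T_2},X_{T_3},X_{T_4})+\gamma(d)\,\gamma(d+j_2-j_1)+\gamma(d+j_2)\,\gamma(d-j_1).
\]
The two products of autocovariances are handled directly by (i): in each product one factor is $\gamma(d)$ or $\gamma(d+j_2)$, bounded by $C(1+d)^{-\alpha}$, while the other factor is at most $1$; hence both are $\le C(1+d)^{-\alpha}$ uniformly in $j_1,j_2\ge0$.

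The substantive term is the cumulant, and here the naive route (projection expansion followed by Cauchy--Schwarz) is too lossy, because it discards the cancellation that makes the cumulant vanish for independent blocks. My plan is to exploit this localization. Assuming WLOG $t\le s$, the leftmost of the four points is $t$ and the span is at least $s-t=d$, so among the three consecutive gaps of the sorted points the largest gap $g$ satisfies $g\ge d/3$. Cutting in the middle of this largest gap splits the indices into a nonempty left block $L$ and right block $R$; the cumulant would be exactly zero if the innovations shared across the cut were decoupled. Expanding the dependence on the shared innovations $e_k$ (those at or below the cut) through the projections $P_k$, every term that actually links $L$ to $R$ pairs a projection of an $L$-variable with one of an $R$-variable, contributing a factor $\|P_kX_{\tau_L}\|_4\,\|P_kX_{\tau_R}\|_4\le\delta_m(\tau_L-k)\,\delta_m(\tau_R-k)$ with $\tau_R-\tau_L\ge g$. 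Summing over $k$ produces a two-factor convolution of exactly the type appearing in (i), bounded by $C(1+g)^{-\alpha}\le C'(1+d)^{-\alpha}$.

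The step I expect to be the main obstacle is precisely the bookkeeping of this multilinear expansion: making rigorous that every contribution creating $L$--$R$ dependence carries two decaying $\delta$-factors straddling the gap, so that the full exponent $\alpha$ (and not $\alpha-1$) survives, just as the two-factor convolution does in (i). The accompanying Hölder estimates involve norms up to fourth powers of the $X_{T_i}$ and are comfortably covered by the assumed $m>16$ moments together with $\delta_4(k)\le\delta_m(k)\le C(1+k)^{-\alpha}$ and $\alpha>3$. Ties among the $T_i$ and the degenerate case $d=0$ are treated separately, the latter reducing to the trivial fourth-moment bound $|\Cov(Y_{t,j_1},Y_{s,j_2})|\le C<\infty=(1+0)^{-\alpha}\cdot C$.
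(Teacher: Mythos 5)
Parts (i) and (ii) of your proposal coincide with the paper's proof: the same projection decomposition $X_t=\sum_{s\ge0}P_{t-s}(X_t)$, the same orthogonality step yielding $\gamma(j)=\sum_{s\ge0}E\big(P_{-s}(X_0)P_{-s}(X_j)\big)$, and the same two-factor convolution $\sum_{s}\delta_m(s)\delta_m(j+s)\le C(1+j)^{-\alpha}$; (ii) is then immediate in both treatments. For (iii), however, you take a genuinely different route. The paper never introduces cumulants: it assumes $t>s$, splits into the cases $j_1\ge t-s$, $j_1\in[(t-s)/2,\,t-s]$ and $j_1\in[0,(t-s)/2]$, and in each case replaces one factor by the truncated conditional expectation $X_r^{(k)}=E[X_r\mid\mathcal F_{r,k}]$ (respectively $Y_{t,j_1}^{(t-s-1)}$), so that the decoupled term has vanishing expectation, finishing with H\"older and the tail sums $\sum_{k\ge t-s}\delta_m(k)$. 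Your decomposition into a fourth cumulant plus the two products $\gamma(d)\gamma(d+j_2-j_1)+\gamma(d+j_2)\gamma(d-j_1)$ is correct, and the treatment of the covariance products via (i) is fine.

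The genuine gap is exactly where you flag it: the claim that in the multilinear expansion "every term that actually links $L$ to $R$ pairs a projection of an $L$-variable with one of an $R$-variable" is asserted, not proved, and as formulated it does not go through. Expanding all four variables in the martingale differences $P_k$ gives products of three or four projections, and these have no reason to vanish off the diagonal, so there is no clean pairing of single-variable factors $\|P_kX_{\tau_L}\|_4\,\|P_kX_{\tau_R}\|_4$; orthogonality of the $P_k$'s is only available \emph{bilinearly}. The repair is to run the argument of (i) at the level of block products: with $A$ and $B$ the centered products of the $X$'s in the left and right block, write the cumulant as $E(AB)$ minus covariance products all of whose arguments straddle the gap (hence $\le C(1+g)^{-\alpha}$ by (i)), and bound $E(AB)=\sum_{k}E(P_kA\,P_kB)$ using $\|P_kA\|_2\le C\sum_{\tau\in L}\delta_4\big((\tau-k)_+\big)$ and its analogue for $B$ (H\"older with $\|\cdot\|_6$-norms when a block contains three points, comfortably covered by $m>16$); this reduces to the convolution of (i) and does deliver $C(1+g)^{-\alpha}$. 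Two further remarks put your effort in perspective. First, the crude one-sided coupling bound you dismiss is precisely what the paper itself uses: its displayed sums $\sum_{k\ge t-s}\delta_m(k)$ are of order $(1+t-s)^{-(\alpha-1)}$, not $(1+t-s)^{-\alpha}$, so as written the paper's proof of (iii) actually establishes the exponent $\alpha-1$; since $\alpha>3$, this weaker exponent suffices for every subsequent use of the lemma (only summability of $(1+|t-s|)^{-(\alpha-1)}$ with $\alpha-1>2$ is needed, e.g.\ in Proposition~\ref{l.sigma-hat}). Second, granting the block-product repair, your route is sound and in fact recovers the full exponent $\alpha$ stated in the lemma — but the decisive two-factor step must be carried out as above rather than by the single-variable pairing you sketch.
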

\begin{proof}
	\begin{itemize}{}
		\item[(i)] 
We make use of the bound
\begin{equation}\label{B-13a}
\Vert P_0(X_s)\Vert _m \le \delta _m(s), s \ge 0,
\end{equation}
of the so-called projection operator $P_{j-s}(X_j) := E[X_j\vert {\cal F}_{j-s}] - E[X_j\vert {\cal F}_{j-s-1}]$ for 
$s\ge 0$ and $j\in \mathbb Z$, where
${\cal F}_i:= \sigma (e_i, e_{i-1}, \ldots )$. For a proof of \eqref{B-13a} we refer to  \citeasnoun{Wu2005}, Theorem 1.\\
Note that $X_t=\sum _{s=0}^{\infty} P_{t-s}(X_t)$ a.s. and in $L_1$. 
This is the case because $E[X_t\vert {\cal F}_{t-s}]$ converges for $s\to \infty$ by the backward martingale 
convergence theorem 
a.s. and in
$L_1$ towards a limit measurable with respect to ${\cal F}_{-\infty}$, which is trivial because of the i.i.d. structure of
$(e_i)$. Therefore the limit is constant and coincides with the mean of $X_t$ which is assumed to be zero.
Then (i) follows since,
\begin{align*}
\vert \gamma (j)\vert =& \vert E(X_0\, X_j)\vert = \Big\vert \sum _{s_1,s_2=0}^{\infty} E(P_{-s_1}(X_0)P_{j-s_2}(X_j))\Big\vert \\
=& \Big\vert \sum _{s=0}^{\infty} E(P_{-s}(X_0)P_{-s}(X_j))\Big\vert , \ \text{because }
E(P_{-s_1}(X_0)P_{j-s_2}(X_j))=0, -s_1\neq j-s_2      \\
\le & \sum _{s=0}^{\infty} E\vert P_{0}(X_s)P_{0}(X_{j+s})\vert, \ \text{because of stationarity} \\
\le & \sum _{s=0}^{\infty} \Vert P_{0}(X_s)\Vert _{m^{\prime}} \Vert P_{0}(X_{j+s})\Vert _m ,\ \text{where } 1/m+1/m^{\prime}=1\\
\le & \sum _{s=0}^{\infty} \Vert P_{0}(X_s)\Vert _{m} \Vert P_{0}(X_{j+s})\Vert _m , \ \text{because } m^{\prime }\le m 
\text{ for } m\ge 2 \\
\le & \sum _{s=0}^{\infty} \delta _m(s) \delta _m(j+s), \text{ by } \eqref{B-13a}.
\end{align*}
From this bound we get for $ j\geq 0$, 
\begin{align*}
|\gamma(j)| \leq C \sum_{s=0}^{\infty} \frac{1}{(1+s)^{\alpha} (1+s+j)^{\alpha}} \leq C \frac{1}{(1+j)^{\alpha}} \sum_{s=0}^{\infty} \frac{1}{(1+s)^{\alpha} } \leq C (1+j)^{-\alpha} .
\end{align*}
\item[(ii)] The assertion follows because  
\[ \sum _{j=1}^{\infty } j^r \vert \gamma (j)\vert \leq C \sum _{j=1}^{\infty } j^r(1+j)^{-\alpha} \le  C\sum_{j=1}^\infty \frac{1}{(1+j)^{\alpha-r}} <\infty \]
for $ r<\alpha-1$.
\item[(iii)] We assume without loss of generality that $t>s$ and define   $X_r^{(k)}\,:=\,E[X_r\mid  \mathcal F_{r,k}]$. 
The following three cases can then occur.   
If  $ j_1\leq t-s$, then
\begin{align*}
\vert E(Y_{t,j_1}Y_{s,j_2})\vert & \leq \vert E(X_tX_{t-{j_1}}X_sX_{s-j_2}| + |\gamma(j_1)\gamma(j_2)|\\
& = \vert E(\big(X_t - X_t^{(t-s-1)})X_{t-j_1}X_s X_{s-j_2}\big)\vert +  |\gamma(j_1)\gamma(j_2)| \\
& \leq \|X_{t-j_1}X_sX_{s-j_2}\|_{m/(m-1)} \|X_t - X_{t}^{(t-s-1)}\|_m + C(1+j_1)^{-\alpha}\\
& \leq C\,(1+t-s)^{-\alpha}.
\end{align*}
If $j_1 \in [(t-s)/2,t-s]$, then
\begin{align*}
\vert E(Y_{t,j_1}Y_{s,j_2})\vert & \leq \vert E(X_tX_{t-{j_1}}X_sX_{s-j_2}| + |\gamma(j_1)\gamma(j_2)|\\
& = \vert E(\big(X_t - X_t^{(t-j_1-1)})X_{t-j_1}X_s X_{s-j_2}\big)\vert +  |\gamma(j_1)\gamma(j_2)| \\
& \leq \|X_{t-j_1}X_sX_{s-j_2}\|_{m/(m-1)} \|X_t - X_{t}^{(t-j_1-1)}\|_m + C(1+j_1)^{-\alpha}\\
& \leq C(1+j_1)^{-\alpha} \leq C(1+(t-s)/2)^{-\alpha} \leq C 2^\alpha (1+t-s)^{-\alpha}.
\end{align*}
Finally, if $ j_1\in[0,(t-s)/2]$, then
\begin{align*}
\vert E(Y_{t,j_1}Y_{s,j_2})\vert & \leq \vert E((Y_{t,j_1}-Y_{t,j_1}^{(t-s-1)})Y_{s,j_2}) \vert \\
& \leq  \|Y_{t,j_1}-Y_{t,j_1}^{(t-s-1)}\|_{m/(m-2)}\| Y_{s,j_2} \|_{m/2}\\
& \leq C\sum_{k=t-s}^\infty \delta_m(k) +C \sum_{k=t-s-j_1}^\infty \delta_m(k) \\
& \leq C(1+t-s)^{-\alpha} + C\sum_{k=(t-s)/2}^\infty \delta_m(k) \\
& \leq C \,2^{\alpha} (1+t-s)^{-\alpha}.
\end{align*}
\end{itemize}

\end{proof}
\medskip

\begin{lemma} \label{l.inequalities}
	Suppose that  Assumptions 1 to 3 hold. Then, we have
\bea \label{B-13}
\big\Vert \frac{1}{\sqrt{T}} \sum _{j=0}^{M_T} \sum _{t=j+1}^T  a_{k,j} \big( X_t X_{t-j} -\gamma (j)\big) \big\Vert _{m/2}
\le \mbox{C}
\eea
and
\bea\label{eq.var-lower}
&&\hspace{-0.7cm}
\sup_{k\in\{1,\dots, N_T \}} \Big|\big\Vert \frac{1}{\sqrt{T}}  \sum _{t=1}^T Z_{t,k} \big\Vert_2^2 -  \,\frac{f^2(\lambda _{k,T})}{  M_T} 	\sum_{j=-M_T }^{M_T } 
w^2\big(\frac{j}{M_T}\big) \,(1+\cos(2j\,\lambda_{k,T}))  \Big|\nonumber\\
&&=o(1).
\eea
Further
\bea \label{B-15a}
\Big\|\frac{1}{\sqrt T}\sum_{t=1}^T (Z_{t,k}-\widetilde Z_{t,k}^{(s)})\Big\|_{m/2} \,\leq\,  \mbox{C}\cdot d_{s,m} 
\eea
with the $m$-dependent random variables ($m=2s$)  $\widetilde Z_{t,k}^{(s)}$, $ k=1,2, \ldots, N_T,$
defined  as:
\begin{equation} \label{eq.Z-tilde}
\widetilde Z_{t,k}^{(s)} \,:=\, \sum_{j=0}^{M_T}a_{k,j}\, \left(X_t^{(s)}X_{t-j}^{(s)}\, -\, E[X_t^{(s)}X_{t-j}^{(s)}]\right)\, \ind_{t>j}\, ,
\end{equation}
where $X_r^{(s)}\,:=\,E[X_r\mid  \mathcal F_{r,s}]$ and $s\geq M_T$, and with
\begin{equation} \label{eq.dsm}
d_{s, m}\,:=\,\sum_{h=0}^\infty\min\Big\{\delta_m(h)\,, \,\big(\sum_{j=s+1}^\infty\delta_m^2(j)\big)^{1/2}\Big\}.
\end{equation}
\end{lemma}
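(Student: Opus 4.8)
The plan is to establish the three assertions in the order \eqref{eq.var-lower}, \eqref{B-15a}, \eqref{B-13}, since the variance computation and the $m$-dependent approximation both feed into the moment bound. Two auxiliary estimates are used throughout. Writing $S_{t,k}:=\sum_{j=0}^{M_T}a_{k,j}X_{t-j}\ind_{t>j}$ and $S_{t,k}^{(s)}:=\sum_{j=0}^{M_T}a_{k,j}X_{t-j}^{(s)}\ind_{t>j}$, so that $Z_{t,k}=X_tS_{t,k}-E[X_tS_{t,k}]$ and $\widetilde Z_{t,k}^{(s)}=X_t^{(s)}S_{t,k}^{(s)}-E[X_t^{(s)}S_{t,k}^{(s)}]$, the first estimate is the filtered-process bound $\sup_{k,T}\|S_{t,k}\|_m\le C$: decomposing $S_{t,k}=\sum_i P_i(S_{t,k})$ into martingale differences and applying Burkholder's inequality gives $\|S_{t,k}\|_m\le C_m\big(\sum_{h\ge0}((|a_{k,\cdot}|\ast\delta_m)(h))^2\big)^{1/2}$, which by Young's convolution inequality is at most $C\,\|(a_{k,j})_j\|_2\,\sum_h\delta_m(h)=O(1)$, because $\sum_j a_{k,j}^2=O(1)$ (established before Theorem~\ref{t.asymptotics}) and $\sum_h\delta_m(h)<\infty$ by \eqref{delta} (as $\alpha>1$). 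The second is that the truncation error $\|X_r-X_r^{(s)}\|_m$ is of the order of the $L_2$-tail $\big(\sum_{j>s}\delta_m^2(j)\big)^{1/2}$ appearing inside $d_{s,m}$, obtained by applying Burkholder to the tail of the projection expansion of $X_r$.

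For the variance \eqref{eq.var-lower} I compute $\|T^{-1/2}\sum_t Z_{t,k}\|_2^2=T^{-1}\sum_{j_1,j_2=0}^{M_T}a_{k,j_1}a_{k,j_2}\sum_{t_1,t_2}\Cov(X_{t_1}X_{t_1-j_1},X_{t_2}X_{t_2-j_2})$ and substitute the product-moment identity $\Cov(X_{t_1}X_{t_1-j_1},X_{t_2}X_{t_2-j_2})=\gamma(t_1-t_2)\gamma(t_1-j_1-t_2+j_2)+\gamma(t_1-t_2+j_2)\gamma(t_1-j_1-t_2)+\mathrm{cum}_4(\cdot)$. Summing the two Gaussian pairings against the $a_{k,j}$ and using $\sum_\tau\gamma(\tau)e^{-i\tau\lambda}=2\pi f(\lambda)$ produces the two summands $1$ and $\cos(2j\lambda_{k,T})$ together with the prefactor $f^2(\lambda_{k,T})M_T^{-1}\sum_j w^2(j/M_T)$. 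The fourth-cumulant term is $O(M_T^{-1})$ since $\mathrm{cum}_4$ is absolutely summable in all of its lag arguments (a consequence of \eqref{delta} with $\alpha>3$, cf.\ Lemma~\ref{l.cov}); the edge effects from the constraints $t_i>j_i$ and the replacement of the Riemann sum by $\int w^2$ are likewise $o(1)$. All estimates are uniform in $k$ because each error bound depends on the frequency only through factors $|\cos(\cdot)|\le1$.

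For \eqref{B-15a} I telescope $Z_{t,k}-\widetilde Z_{t,k}^{(s)}=(X_t-X_t^{(s)})S_{t,k}+X_t^{(s)}(S_{t,k}-S_{t,k}^{(s)})$ after recentering, and bound the $L_{m/2}$-norm of the normalized sum of this stationary, mean-zero sequence by Wu's inequality, i.e.\ by $C$ times the sum over lags of its physical-dependence coefficients. Each such coefficient admits two bounds — the raw coefficient $\delta_m(h)$, and the truncation-error size $\big(\sum_{j>s}\delta_m^2(j)\big)^{1/2}$ — and the resulting $\sum_h\min\{\delta_m(h),(\sum_{j>s}\delta_m^2(j))^{1/2}\}$ is exactly $d_{s,m}$; the factor $\|S_{t,k}\|_m=O(1)$ keeps the $\sqrt{M_T}$ coming from the spread of the $a_{k,j}$ harmless, since it multiplies only the (tiny) truncation tail.

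The real difficulty is \eqref{B-13}, and the obstacle is genuine: because $a_{k,j}\sim M_T^{-1/2}w(j/M_T)\cos(j\lambda_{k,T})$ is spread over $\asymp M_T$ lags, any estimate routed through the triangle inequality, or through the $\ell_1$-norm of the physical-dependence coefficients of $Z_{t,k}$, overshoots by a factor $\sqrt{M_T}$ (that sum is $O(\sqrt{M_T})$, whereas the truth is $O(1)$), so the oscillation of the cosines \emph{must} be exploited. I would do this by the blocking device foreshadowed after Theorem~\ref{t.asymptotics}: approximate $Z_{t,k}$ by the $s$-dependent $\widetilde Z_{t,k}^{(s)}$ with $s\asymp M_T$ (so that $d_{s,m}=O(1)$ by \eqref{B-15a}), partition $\{1,\dots,T\}$ into consecutive blocks of length $\asymp s$, and note that non-adjacent blocks are independent. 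Splitting into odd- and even-indexed block sums $B_i$ and applying Rosenthal's inequality to each independent family gives a leading term governed by $\sum_i\Var(B_i)\asymp\Var(\sum_t\widetilde Z_{t,k})=O(T)$ — it is precisely here that the cancellation is recovered, through the genuine second-order variance of \eqref{eq.var-lower} rather than a worst-case bound — and a remainder controlled by $(\sum_i\|B_i\|_{m/2}^{m/2})^{2/m}$, where $\|B_i\|_{m/2}\le Cs$ and $\|\widetilde Z_{t,k}^{(s)}\|_{m/2}\le\|X_t^{(s)}\|_m\|S_{t,k}^{(s)}\|_m=O(1)$. After dividing by $\sqrt T$ the leading term is $O(1)$ and the remainder is $O\big(T^{2/m-1/2}s^{1-2/m}\big)=o(1)$ under $m>16$ and the admissible range of $a_s$; together with the $O(d_{s,m})=O(1)$ approximation error this yields \eqref{B-13}.
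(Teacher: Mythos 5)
Your treatments of \eqref{eq.var-lower} and \eqref{B-15a} are essentially the paper's: the variance identity is proved there by exactly your pairing-plus-fourth-cumulant computation (with summability of the joint cumulants taken from Theorem S.6 in the supplement of \citeasnoun{XiaoWu2014}), and \eqref{B-15a} is obtained in the paper by citing Proposition~1 of \citeasnoun{LiuWu2010}, whose proof is in substance the telescoping/martingale argument you sketch. One caution on \eqref{B-15a}: your cross term $\Vert X_t-X_t^{(s)}\Vert_m\cdot\delta_h(S_{t,k})$ sums over $h$ to a quantity of order $\sqrt{M_T}\,\big(\sum_{j>s}\delta_m^2(j)\big)^{1/2}$, and bounding this by $C\,d_{s,m}$ uses the polynomial envelope of Assumption 1; for irregular $\delta_m$ the inequality $\sqrt{M_T}\,\mathrm{tail}\le C\,d_{s,m}$ can fail, so this step needs the explicit decay \eqref{delta}, which you should say.

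The genuine gap is in \eqref{B-13}. Your Rosenthal remainder is, as you compute, of order $T^{2/m-1/2}s^{1-2/m}$ with $s\asymp M_T\sim T^{a_s}$, i.e.\ $T^{2/m-1/2+a_s(1-2/m)}$, and this tends to zero only when $a_s<(1/2-2/m)/(1-2/m)$, roughly $M_T\ll\sqrt T$. But the admissible range \eqref{eq.all} does not enforce this: for large $m$ and small $\lambda$ it allows $a_s$ arbitrarily close to $1$ (e.g.\ $m=100$, $\lambda=0.01$ permits $a_s$ up to $0.76$, where your remainder exponent is positive), and the lemma is stated, and used, without any such restriction. The obstruction is structural: the sharp within-block bound $\Vert B_i\Vert_{m/2}\le C\sqrt s$ that would rescue the remainder for all $a_s<1$ is precisely \eqref{B-13} at scale $s$, so invoking it is circular; your crude bound $\Vert B_i\Vert_{m/2}\le Cs$ loses a factor $\sqrt s$ that the single-scale blocking cannot recover. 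A repair would iterate the blocking across dyadic scales, but the paper avoids all of this: \eqref{B-13} follows in one line from the quadratic-form moment inequality (S8) in the supplement of \citeasnoun{XiaoWu2014}, which bounds $\big\Vert\sum_{i,j}a_{k,i-j}(X_iX_j-\gamma(i-j))\big\Vert_{m/2}$ by $C\big(\sum_h\delta_m(h)\big)^2\big(\sum_j a_{k,j}^2\big)^{1/2}\sqrt T$. Note that this inequality involves only the $\ell_2$-norm of the coefficients and holds for arbitrary signs, so your premise that the oscillation of the cosines \emph{must} be exploited is not accurate: what saves \eqref{B-13} (and, in your own variance step, the bound $\sum_{j_1,j_2}|a_{k,j_1}a_{k,j_2}g(j_1-j_2)|\le\Vert a_{k,\cdot}\Vert_\infty\Vert a_{k,\cdot}\Vert_1\sum_d|g(d)|=O(1)$) is the uniform smallness $\max_j|a_{k,j}|=O(M_T^{-1/2})$ together with $\ell_2$-boundedness, not sign cancellation.
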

\medskip

\begin{proof}
Inequality \eqref{B-13} is a direct consequence of (S8) from the supplement of \citeasnoun{XiaoWu2014} since 
\bean
&&   \big\Vert \sum _{j=0}^{M_T}\sum _{t=j+1}^T  a_{k,j} \big( X_t X_{t-j} -\gamma (j)\big) \big\Vert _{m/2}\\
&=& \big\Vert \sum _{i,j=1,i-j\in\{0,\ldots ,M_T\}}^T a_{k,i-j}\big( X_i X_j-\gamma (i-j)\big)\big\Vert _{m/2}\\
&\le & \mbox{C}\cdot \Big( \sum_{h=0}^{\infty } \delta_m(h)\Big)^2 \max _{k=1,\ldots , N_T} \left(\sum_{j=0}^{M_T} a_{k,j}^2\right)^{1/2}\, \sqrt T.
\eean
For  \eqref{eq.var-lower},
	first note that Theorem~S.6 in the supplement of \citeasnoun{XiaoWu2014} assures summability of the joint fourth order cumulants.
	Hence, 
	\begin{equation}\label{eq.var-pr}
		\begin{aligned}
			&\big\Vert \frac{1}{\sqrt{T}}  \sum _{t=1}^T Z_{t,k} \big\Vert_2^2\\
			&=\frac{1}{4\pi^2TM_T}	\sum_{j_1,j_2=-M_T}^{M_T}\sum_{t_1= |j_1|+1}^T\sum_{t_2= |j_2|+2}^T
			w\big(\frac{j_1}{M_T}\big)w\big(\frac{j_2}{M_T}\big)\,\cos(j_1\lambda_{k,T})\,\cos(j_2\lambda_{k,T})\\
			&\qquad\times\gamma(t_1-t_2)\gamma(t_1-t_2+|j_2|-|j_1|)\\
			&\quad+\frac{1}{4\pi^2TM_T}	\sum_{j_1,j_2=-M_T}^{M_T}\sum_{t_1= |j_1|+1}^T\sum_{t_2= |j_2|+1}^T
			w\big(\frac{j_1}{T}\big)w\big(\frac{j_2}{T}\big)\,\cos(j_1\lambda_{k,T})\,\cos(j_2\lambda_{k,T})\\
			&\qquad\times \gamma(t_1-t_2+|j_2|)\gamma(t_1-t_2-|j_1|)+R^{(1)}_{T,k}
		\end{aligned}
	\end{equation}		
	with $\sup_k|R_{T,k}^{(1)}|=o(1)$. 
	For the first summand on the r.h.s.~we get from $\sum_{k \in \N} k|\gamma(k)|<\infty$ that
	$$
	\begin{aligned}
		&\frac{1}{4\pi^2TM_T}	\sum_{j_1,j_2=-M_T}^{M_T}\sum_{t_1 =|j_1|+1}^T\sum_{t_2 = |j_2|+1}^T
		w\big(\frac{j_1}{T}\big)w\big(\frac{j_2}{T}\big)\,\cos(j_1\lambda_{k,T})\,\cos(j_2\lambda_{k,T})\\
		&\qquad\times\gamma(t_1-t_2)\gamma(t_1-t_2+|j_2|-|j_1|) \\
		&=  \frac{1}{\pi^2TM_T}	\sum_{j_1,j_2=1}^{M_T}\sum_{t_1 =j_1+1}^T\sum_{t_2= j_2+1}^T
		w\big(\frac{j_1}{M_T}\big)w\big(\frac{j_2}{M_T}\big)\,\cos(j_1\lambda_{k,T})\,\cos(j_2\lambda_{k,T})\\
		&\qquad\times\gamma(t_1-t_2)\gamma(t_1-t_2+j_2-j_1)+R^{(2)}_{T,k}\\
		&=  \frac{1}{\pi^2 M_T}\sum_{t\in\Z}\sum_{ j\in \Z}\gamma(t)\gamma (t+j)	\sum_{j_1=1\vee (1-j)}^{M_T\wedge (M_T-j)} 
		w\big(\frac{j_1}{M_T}\big)w\big(\frac{j+j_1}{M_T}\big)\,\cos(j_1\lambda_{k,T})\\
		& \qquad \times \,\cos((j+j_1)\lambda_{k,T})+R^{(3)}_{T,k}\\
		&=  \frac{1}{2\pi^2 M_T}\sum_{t\in\Z}\sum_{ j\in \Z}\gamma(t)\gamma (t+j)	\sum_{j_1=1 }^{M_T } 
		w^2\big(\frac{j_1}{M_T}\big) \,\\
		& \qquad\times\, \big[\cos(j\lambda_{k,T})\,+\, \cos((j+2j_1)\lambda_{k,T})\big] +R^{(4)}_{T,k}\\
		&= f^2(\lambda _{k,T})   \, \frac{2}{  M_T} 	\sum_{j_1=1 }^{M_T } 
		w^2\big(\frac{j_1}{M_T}\big) \,(1+\cos(2j_1\lambda_{k,T}))  +R^{(5)}_{T,k},
	\end{aligned}
	$$
	where $\sup_{k}|R_{T,k}^{(\ell)}|=o(1),~\ell=2,\dots, 5.$ Similarly, 
	we obtain from $\sum_{k \in \N} k^2|\gamma(k)|<\infty$ for the second summand on the r.h.s.~of \eqref{eq.var-pr}
	$$
	\begin{aligned}
		&	\frac{1}{4\pi^2TM_T}	\sum_{j_1,j_2=-M_T}^{M_T}\sum_{t_1= |j_1|+1}^T\sum_{t_2> |j_2|+1}^T
		w\big(\frac{j_1}{T}\big)w\big(\frac{j_2}{T}\big)\,\cos(j_1\lambda_{k,T})\,\cos(j_2\lambda_{k,T})\\
		&\qquad\times \gamma(t_1-t_2+|j_2|)\gamma(t_1-t_2-|j_1|)\\
		&=\frac{1}{\pi^2M_T}\sum_{t\in\Z}\sum_{j=2}^{2M_T}\gamma(t)\gamma(t-j)\, \sum_{j_1=1\vee (j-M_T)}^{M_T\wedge (j-1)}	w\big(\frac{j_1}{T}\big)w\big(\frac{j-j_1}{T}\big)\,\cos(j_1\lambda_{k,T})\,\\
		&\qquad\times\,\cos((j-j_1)\lambda_{k,T})\, +R_{T,k}^{(6)}\\
		&	=\frac{1}{\pi^2M_T}\sum_{t\in\Z}\sum_{j=2}^{\sqrt{M_T}}\gamma(t)\gamma(t-j)\, \sum_{j_1=1}^{\sqrt{M_T}-1}	w\big(\frac{j_1}{T}\big)w\big(\frac{j-j_1}{T}\big)\,\cos(j_1\lambda_{k,T})\,\\
		&\qquad\times\,\cos((j-j_1)\lambda_{k,T})\, +R_{T,k}^{(7)}\\
		&= R_{T,k}^{(8)},
	\end{aligned}
	$$	
	where $\sup_{k}|R_{T,k}^{(\ell)}|=o(1),~\ell=6,\,7,\, 8.$ This finishes the proof of \eqref{eq.var-lower}.
	\\
Inequality \eqref{B-15a} in turn can be deduced from Proposition~1 in \citeasnoun{LiuWu2010} as follows	
$$
\Big\| \sum_{t=1}^T (Z_{t,k}-\widetilde Z_{t,k}^{(s)})\Big\|_{m/2}\leq \mbox{C}\,\sqrt T\, d_{s,m}\,  \max _{k=1,\ldots , N_T}\left(\sum_{j=0}^{M_T} a_{k,j}^2\right)^{1/2}\, \sum_{h=0}^{\infty } \delta_m(h).
$$

\end{proof}

\begin{proof}[Proof of Theorem~\ref{t.asymptotics}]
 We  mainly
follow the strategy of the proof of Theorem~1 in \citeasnoun{Zhang_etal2022} although some of the conditions used there are not fulfilled in our case.
In particular, a different $ m$-dependent approximation  $\widetilde{Z}^{(s)}_{t,k}$, i.e. $m=2s$,  is used in our proof which   then  leads  to  an improved rate of convergence.
Using the notation $h_{\tau,\tau,x}$ as in  \citeasnoun{Zhang_etal2022}, we  get the  bound
\begin{align} \label{B-14}
&\sup _{x\in \mathbb R} \Big\vert P\Big \{ \max _{k=1,\ldots , N_T} 
\sqrt{\frac{T}{M_T}} \Big\vert \widehat f_T(\lambda_{k,T} )   -E\, \widehat f_T(\lambda_{k,T} )\Big\vert \le x \Big\} \nonumber \\
&\quad \quad - P\Big\{ \max _{k=1,\ldots , N_T} \vert \xi_k \vert \le x \Big\} \Big\vert  \nonumber \\
 \leq &  \sup_{x\in\R}\,\Big|Eh_{\tau,\tau,x}\Big(\frac{1}{\sqrt T}\sum_{t=1}^T Z_{t,1},  \dots,\frac{1}{\sqrt T}\sum_{t=1}^T Z_{t,N_T} \Big)
 - Eh_{\tau,\tau,x}\left(\xi_{1},\dots, \xi_{N_T}\right)\Big| \nonumber \\
&\quad \quad+   C\, t\,  \left(1+\sqrt{\log(N_T)}  +   \sqrt{|\log(t)|}\right) ,
\end{align}
in view of  \eqref{eq.var_f} and \eqref{B-13}, where $t=(1+\log(2N_T))/\tau $ for some $\tau >0$  to be specified later.
For the $m$-dependent random variables (with $m=2s$) $ \widetilde Z_{t,k}^{(s)}$, $ k=1,2, \ldots, N_T$,
defined in \eqref{eq.Z-tilde},
by  the properties of the function $h_{\tau,\tau, x}$  and  from  Lemma~\ref{l.inequalities}, we get
\begin{align} \label{B-15b}
&\sup_{x\in\R}\,\Big|Eh_{\tau,\tau,x}\Big(\frac{1}{\sqrt T}\sum_{t=1}^T Z_{t,1},\dots,\frac{1}{\sqrt T}\sum_{t=1}^T Z_{t,N_T} \Big)
\nonumber \\
& \ \ \ \ \ \ \ \ \ \ \ \ \ \ \ \ - Eh_{\tau,\tau,x}\Big(\frac{1}{\sqrt T}\sum_{t=1}^T \widetilde Z^{(s)}_{t,1},\dots,\frac{1}{\sqrt T}\sum_{t=1}^T Z^{(s)}_{t,N_T} \Big)\Big| \nonumber \\
&\leq C\, \tau\, E\Big[\max_{k=1,\dots, N_T}\Big |\frac{1}{\sqrt T}\sum_{t=1}^T (Z_{t,k}-\widetilde Z_{t,k}^{(s)})\Big |\Big] \nonumber \\
&\leq C\, \tau\, N_T^{2/m}\, \max_{k=1,\dots, N_T}
\Big \|\frac{1}{\sqrt T}\sum_{t=1}^T (Z_{t,k}-\widetilde Z_{t,k}^{(s)})\Big\|_{m/2 \nonumber }\\
&\leq C\, \tau\, N_T^{2/m} \, d_{s,m}
\end{align}
with $d_{s,m}$ given in  (\ref{eq.dsm}). The introduction of  the random variables $ \widetilde{Z}^{(s)}_{t,k}$ allows  us to proceed with the classical big block/small block  technique. Toward this, we define for any integer $l>2s$ big and small blocks  of the $ \widetilde{Z}_{t,k}^{(s)}$ as 
$$
S_{j,k}\,:=\, \frac{1}{\sqrt T}\sum_{t=2(j-1)(s+l)+1}^{2(j-1)(s+l)+2l}\widetilde Z_{t,k}^{(s)}
\quad\text{and}\quad 
U_{j,k}\,:=\, \frac{1}{\sqrt T}\sum_{t=2(j-1)(s+l)+2l+1}^{(2j(s+l))\wedge T}\widetilde Z_{t,k}^{(s)}
$$
for $j=1,\dots, \lceil \frac{T}{2(s+l)}\rceil=: V_T$.
Smoothness of $h_{x,x,\tau}$,  Rosenthal's inequality and similar arguments as in  \eqref{B-13} yield 
\begin{align} \label{B-17}
	&\sup_{x\in\R}\,\Big|Eh_{\tau,\tau,x}\Big(\frac{1}{\sqrt T}\sum_{t=1}^T   \widetilde Z^{(s)}_{t,1},\dots,\frac{1}{\sqrt T}\sum_{t=1}^T Z^{(s)}_{t,N_T} \Big) \nonumber \\
	 & \ \ \ \ \ \ \ \ \ \ \ \ \ \ \ \ - Eh_{\tau,\tau,x}\Big(\sum_{j=1}^{V_T} S_{j,1},\dots,\sum_{j=1}^{V_T} S_{j,N_T}\Big)\Big| \nonumber \\
	&\leq C\, \tau\, N_T^{2/m}\, \max_{k=1,\dots, N_T} \Big\|\sum_{j=1}^{V_T}U_{j,k}\Big\|_{m/2} \nonumber \\
	&\leq C\, \tau\, N_T^{2/m}\,\sqrt{V_T}\,\max_{k=1,\dots , N_T}\max_{j=1,\dots , V_T}\|U_{j,k}\|_{m/2} \nonumber \\
	&\leq C\,\tau\, N_T^{2/m}\,\sqrt{\frac{V_T\, s}{ T }} \nonumber \\
	&\leq  C\,\tau\, N_T^{2/m}\,\sqrt{\frac{  s}{l }}.
\end{align} 
Next, we define centred, joint normal random variables $(S^*_{j,k})_{j=1,\dots, V_T, k=1,\dots, N_T}$ with $E[S^*_{j,k_1}S^*_{j,k_2}]=E[S_{j,k_1}S_{j,k_2}]$ and such that $(S^*_{j,1},\dots, S^*_{j,N_T}),$ $j=1,\dots, V_T$ are independent. Further these variables are constructed such that they are independent of  $(S_{j,k})_{j=1,\dots, V_T, k=1,\dots, N_T}$.
We obtain from Lindeberg's method
and  using
 \[ ||S_{j,k}^\ast\|_{m/2} \leq  C_m\|S^\ast_{j,k}\|_2 =  C_m \|S_{j,k}\|_2\leq C_m \|S_{j,k}\|_{m/2},\]
 that, 
\begin{equation}\label{B-18_and_B-19}
	\begin{aligned}
		&\sup_{x\in\R}\,\Big|Eh_{\tau,\tau,x}\Big(\sum_{j=1}^{V_T} S_{j,1},\dots,\sum_{j=1}^{V_T} S_{j,N_T} \Big)
		- Eh_{\tau,\tau,x}\Big(\sum_{j=1}^{V_T} S_{j,1}^*,\dots,\sum_{j=1}^{V_T} S_{j,N_T}^*\Big)\Big|\\
		&\leq C\, (1+\tau^3)\,  \sum_{j=1}^{V_T} E\max_{k=1,\dots, N_T} [|S_{j,k}|^3 \,+\, |S_{j,k}^*|^3]\\
		&\leq C^{\prime }_m\, (1+\tau^3)\,\, N_T^{6/m}\, V_T\,\left(\frac{l}{T}\right)^{3/2} . 
	\end{aligned}
\end{equation} 
Using the bounds
\[ \|\frac{1}{\sqrt{T}} \sum_{i=1}^T (Z_{i,k} - \widetilde{Z}_{i,k}^{(s)})\|_{m/2} \leq C d_{s,m},  \   \  \|\frac{1}{\sqrt{T}}\sum_{i=1}^T Z_{i,k}\|_{m/2} \leq C,\]
derived in Lemma~\ref{l.inequalities},
\begin{align*}
|\sum_{j_1=1}^{V_T} \sum_{j_2=1}^{V_T} E S_{j_1,k_1} U_{j_2,k_2}| & \leq \|\sum_{j=1}^{V_T}  S_{j,k_1}\|_{m/2} \| \sum_{j=1}^{V_T}U_{j,k_2}\|_{m/2} \\
& \leq V_T \max_{j=1,\ldots, V_t}\|S_{j,k_1}\|_{m/2}\max_{j=1,\ldots, V_t}\|U_{j,k_2}\|_{m/2} \\
& \leq C\,V_{T}\, \frac{\sqrt{l s}}{T}
\end{align*}
and 
\[ |\sum_{j_1=1}^{V_T} \sum_{j_2=1}^{V_T} E U_{j_1,k_1} U_{j_2,k_2}| \leq V_T \max_{j=1,\ldots, V_T} \|U_{j,k_1}\|_{m/2}   \max_{j=1,\ldots, V_T} \|U_{j,k_2}\|_{m/2} \leq C V_T \frac{s}{T}, \]
we get 
\begin{equation} \label{B-23}
\begin{aligned}
 \sup_{x\in\R}\,\Big|Eh_{\tau,\tau,x}\Big(\sum_{j=1}^{V_T} S^\ast_{j,1},\dots,\sum_{j=1}^{V_T} S^\ast_{j,N_T} \Big)
		& - Eh_{\tau,\tau,x}\Big(\xi_1,\ldots, \xi_{N_T}\Big)\Big| \\
		& \leq C\,\tau^2\,\big(d_{s,m} + V_T \frac{\sqrt{s l}}{T} \big) .
\end{aligned}
\end{equation}
Now, let $ \tau \sim T^{\lambda}$, $ s\sim T^{a_s}$ and $ l \sim T^{a_l}$  for  some positive numbers  $\lambda $, $a_s$ and $  a_l $. For  (\ref{B-14}), (\ref{B-15b}), (\ref{B-17}), (\ref{B-18_and_B-19}) and (\ref{B-23})  to vanish asymptotically, 
 the following conditions are sufficient:
\begin{enumerate}
\item[(i)] $ t \big(1 +\sqrt{\log(N_T)} + \sqrt{|\log(t)|}\big) \rightarrow 0$, 
\item[(ii)] $ 2\lambda + 4/m +a_s <a_l $, \  $ 6\lambda + 12/m +a_l <1  $,  \  $ 4\lambda + a_s <a_l$ and
\item[(iii)] $ \big(T^{\lambda +2/m} + T^{2\lambda}\big)d_{s,m} \rightarrow 0$.
\end{enumerate}
For (i) and because   $ t=(1+\log(2N_T))/\tau$,  it is easily seen  that   
\[  t \big(1 +\sqrt{\log(N_T)} + \sqrt{|\log(t)|}\big) \leq C T^{-\lambda} \big(\log(N_T)\big)^{3/2},\]
 which converges to zero  provided  $\lambda >0$. 
Furthermore, since  $ m>16$, (ii) is satisfied  if  
$a_s +\max\{4\lambda, 2\lambda+4/m\} < a_l < 1-6\lambda -12/m$. 
Finally,  for (iii) notice first that for $ 0<\delta <\alpha-1$ and  because $ s\sim T^{a_s}$,
\begin{align*}
\big(\sum_{j=s+1}^\infty \delta^2_m(j)\big)^{1/2} & = \big(\sum_{j=s+1}^\infty j^{-2\alpha +1+2\delta} \cdot j^{-(1+2\delta)}\big)^{1/2}\\
& \leq C s^{-\alpha +\delta +1/2} \leq C T^{-a_s(\alpha-\delta -1/2)}.
\end{align*}
Now, for  $r>0$  we have,
\begin{align*}
&\sum_{h=0}^\infty \min\{ (1+h)^{-\alpha}, T^{-a_s( \alpha-1/2-\delta)}\}\\ & \leq  \sum_{h=0}^{T^r} T^{- a_s(\alpha-1/2 -\delta)} + \sum_{h=T^r}^\infty  (1+h)^{-\alpha +1+\delta} h^{-1-\delta} \\
& \leq T^{r -a_s(\alpha -1/2 -\delta)} + C T^{-r(\alpha-1-\delta)}.
\end{align*} 
Balancing both terms in the  last bound  above yields,
\[ r-a_s(\alpha -1/2-\delta) = -r(\alpha-1-\delta) \ \Longleftrightarrow  r =\frac{a_s(\alpha-1/2-\delta)}{\alpha-\delta}\]
and, therefore,
\[  \sum_{h=0}^\infty \min\big\{\delta_m(h), \big(\sum_{j=s+1}^\infty \delta^2_m(j)\big)^{1/2}\big\} =   T^{-\frac{\displaystyle a_s(\alpha-1/2-\delta)(\alpha-1-\delta)}{\displaystyle \alpha-\delta} }\rightarrow 0.\]
Hence for  (iii) to be satisfied, 
\begin{equation} \label{eq.for3}
\frac{\displaystyle a_s(\alpha-1/2-\delta)(\alpha-1-\delta)}{\displaystyle \alpha-\delta } -  \max\{\lambda +2/m \,,\, 2\lambda\} >0
\end{equation} 
should hold true.  
Notice that  for $\alpha  $ satisfying condition   (\ref{eq.a_s}),   (\ref{eq.for3}) holds true and at the same time the rate in (\ref{eq.for3}) is larger or equal  to $ \min\{\kappa_1, \kappa_2\}$. 
\end{proof}
\medskip

\begin{proof}[Proof of Theorem~\ref{t.conf-bands}]
Let 
$$R_T(\lambda_{k,T})=	\sqrt{\frac{T}{M_T}} \,\frac{\widehat f_T(\lambda_{k,T})- E\widehat f_T(\lambda_{k,T})}{f(\lambda_{k,T})}\,\left(\frac{f(\lambda_{k,T})}{\widehat f_T(\lambda_{k,T}) }-1\right)
$$
and  $ t=(1+\log(2N_T))/\tau$ with $\tau\sim T^\lambda$. Then we can split up
$$	
\begin{aligned}
	&	 P\big(   \max _{k=1,\ldots , N_T} \vert \widetilde\xi_k \vert \le x \big) 
	- P\big( \max _{k=1,\ldots , N_T}  
	\sqrt{\frac{T}{M_T}} \frac{\vert \widehat f_T(\lambda_{k,T}) - E\widehat f_T(\lambda_{k,T})\vert }{\widehat f_T(\lambda_{k,T})} \le x \big) \\
	&\leq P\big(   \max _{k=1,\ldots , N_T} \vert \widetilde\xi_k \vert \le x \big)- P\big(   \max _{k=1,\ldots , N_T} \vert \widetilde\xi_k \vert \le x-t \big)\\
&\quad +\,  P\big(   \max _{k=1,\ldots , N_T} \vert \widetilde\xi_k \vert \le x -t\big)	-   P\big( \max _{k=1,\ldots , N_T}  
	\sqrt{\frac{T}{M_T}} \frac{\vert \widehat f_T(\lambda_{k,T}) - E\widehat  f_T(\lambda_{k,T})\vert }{ f(\lambda_{k,T})} \le x -t \big) \\
	&\quad
	+\, P\big( \max _{k=1,\ldots , N_T} |R_T(\lambda_{k,T})|>t\big).
\end{aligned}	
$$
One can proceed analogously to obtain a lower bound which then results in 
$$	
\begin{aligned}
	&	\sup_{x\in\R} \big|\,P\big(   \max _{k=1,\ldots , N_T} \vert \widetilde\xi_k \vert \le x \big) 
	- P\big( \max _{k=1,\ldots , N_T}  
	\sqrt{\frac{T}{M_T}} \frac{\vert \widehat f_T(\lambda_{k,T}) -  E\widehat f_T(\lambda_{k,T})\vert }{\widehat f_T(\lambda_{k,T})} \le x \big)\,\big| \\
	&\leq 	\sup_{x\in\R} \big| P\big(   \max _{k=1,\ldots , N_T} \vert \widetilde\xi_k \vert \le x \big)- P\big(   \max _{k=1,\ldots , N_T} \vert \widetilde\xi_k \vert \le x-t \big)\big|\\
	&\quad +\, \sup_{x\in\R} \big|  P\big(   \max _{k=1,\ldots , N_T} \vert \widetilde\xi_k \vert \le x \big)	-   P\big( \max _{k=1,\ldots , N_T}  
	\sqrt{\frac{T}{M_T}} \frac{\vert \widehat f_T(\lambda_{k,T}) -E\widehat  f_T(\lambda_{k,T})\vert }{ f(\lambda_{k,T})} \le x  \big) |\\
	&\quad
	+\,\sup_{x\in\R} \big|  P\big( \max _{k=1,\ldots , N_T} |R_T(\lambda_{k,T})|>t\big)\big|\\
	&=P_1+P_2+P_3
\end{aligned}	
$$
with obvious abbreviations for $P_1,~P_2, $ and $P_3$. Lemma~A.1 in \citeasnoun{Zhang_etal2022} gives
$$
P_1 \,\leq\, C\, t\,  \left(1+\sqrt{\log(N_T)}  +   \sqrt{|\log(t)|}\right).
$$
 The desired rate for $P_2$ can be derived with exactly the same arguments as in the proof of Theorem~\ref{t.asymptotics} 
 since the spectral density is assumed to be uniformly bounded from below.

\noindent
 Finally, for $P_3$ we make use of the assumed bias property \eqref{eq.thm2a} of $\widehat f_T$, that is of
$$
\sup_\lambda|E\widehat f_T(\lambda)-f(\lambda)|=O(M_T^{-2}).
$$
This allows to bound $P_3$ as follows.
We have
\begin{align*}
\vert R_T(\lambda_{k,T})\vert &=	\sqrt{\frac{T}{M_T}} \frac{\vert \widehat f_T(\lambda_{k,T})- E\widehat f_T(\lambda_{k,T})\vert}{f(\lambda_{k,T})}\,\Big\vert \frac{f(\lambda_{k,T})}{\widehat f_T(\lambda_{k,T}) }-1\Big\vert\\
&=\sqrt{\frac{T}{M_T}} \,\big\vert \widehat f_T(\lambda_{k,T})- E\widehat f_T(\lambda_{k,T})\big\vert
\frac{\vert \widehat f_T(\lambda _{k,T}) -f(\lambda _{k,T})\vert }{\widehat f_T(\lambda _{k,T}) \, f(\lambda _{k,T})}\, .
\end{align*}
A division of the considerations depending on whether 
$ \max _{k} \vert \widehat f_T(\lambda_{k,T}) -f(\lambda _{k,T})\vert $ is less or equal or larger 
than $\inf _{\lambda} f(\lambda )/2$ results in
$$
\begin{aligned}
&P(\max_{k=1,\dots, N_T}|R_T(\lambda_{k,T})|>t)\\
&\leq P\Big( \sqrt{\frac{T}{M_T}}\max_{k=1,\dots, N_T} \vert \widehat f_T(\lambda_{k,T})- E\widehat f_T(\lambda_{k,T})\vert
\frac{2}{\inf _{\lambda} f(\lambda )^2}
\vert \widehat f_T(\lambda_{k,T})- f(\lambda_{k,T})\vert >t \Big)\\
&\quad + P\big( \max _{k=1,\dots, N_T} \vert \widehat f_T(\lambda_{k,T}) -f(\lambda _{k,T})\vert >
\inf _{\lambda} f(\lambda )/2 \big)\, .
\end{aligned}
$$
The first summand is bounded through
\begin{align*}
&C t^{-1}\sqrt{\frac{T}{M_T}}\Big( E \max_{k=1,\dots, N_T}\vert \widehat f_T(\lambda_{k,T})- 
E\widehat f_T(\lambda_{k,T})\vert^2 \\
&\hspace*{1.58cm} + E \max_{k=1,\dots, N_T}\vert \widehat f_T(\lambda_{k,T})- 
E\widehat f_T(\lambda_{k,T})\vert \cdot M_T^{-2}\Big)\\
&\leq \, C t^{-1}\Big( N_T^{4/m} \sqrt{\frac{M_T}{T}} + N_T^{2/m}M_T^{-2}\Big)\, ,
\end{align*}
where for arbitrary random variables $U_k$ we make use of 
$E \max_{k=1,\dots, N_T}\vert U_k\vert \le (E\sum _{k=1}^{N_T} \vert U_k\vert ^r)^{1/r} \le
N_T^{1/r} \max_{k=1,\dots, N_T}\Vert U_k\Vert _r$, $r\ge 1,$ and   
 the last inequality follows from Lemma~\ref{l.inequalities}.\\
A similar consideration for the second summand finally leads to
\[P(\max_{k=1,\dots, N_T}|R_T(\lambda_{k,T})|>t)
\leq\,  C (t^{-1}\, N_T^{4/m}+ N_T^{2/m})\,\Big(\sqrt{\frac{M_T}{T}} + M_T^{-2}\Big) \, .
\]

Using  $ M_T \sim T^{a_s}$  and $ t^{-1} =T^{\lambda}\big/(1+\log(2N_T))$, the above bound for $ P(\max_{k=1,\dots, N_T}|R_T(\lambda_{k,T})|>t)$ implies the order 
\[ 
\frac{\displaystyle T^{\lambda+a_s/2} T^{4/m}}{\displaystyle T^{1/2}(1+\log(2N_T)) } + \frac{\displaystyle T^\lambda T^{4/m}}{\displaystyle T^{2a_s} (1+\log(2N_T))},\]
for  $P_3$, which  for $\lambda+\kappa\leq \min\{1/2-a_s/2-4/m\,,\, 2a_s-4/m\}$  leads to $P_3=O(T^{-\kappa}/\log(2N_T))$.
\end{proof}

\begin{proof}[Proof of Proposition~\ref{l.sigma-hat}]
	First, we split up
	\begin{align}  \label{eq.Boundsigma}
		&\big |\widehat{\sigma}_T(j_1,j_2)\, -\, \sigma_T(j_1,j_2)\big| \nonumber \\
		\leq \ & \Big|  \frac{1}{T}\sum _{t=j_1+1}^T  \sum _{s=j_2+1}^T K\Big(\frac{t-s}{b_T}\Big)\big( X_t X_{t-j_1} -{\gamma} (j_1)\big) 
		\big( X_s X_{s-j_2} - {\gamma} (j_2)\big) \,-\, \sigma(j_1,j_2)\Big| \nonumber \\
		& +\, |\widehat\gamma(j_1)-\gamma(j_1)|\,\Big| \frac{1}{T}\sum _{t=j_1+1}^T  \sum _{s=j_2+1}^T K\Big(\frac{t-s}{b_T}\Big)\big( X_s X_{s-j_2} - {\gamma} (j_2)\big)\Big| \nonumber \\
		& +\, |\widehat\gamma(j_2)-\gamma(j_2)|\,\Big| \frac{1}{T}\sum _{t=j_1+1}^T  \sum _{s=j_2+1}^T K\Big(\frac{t-s}{b_T}\Big)\big( X_t X_{t-j_1} - {\gamma} (j_1)\big)\Big| \nonumber \\
		&+|\widehat\gamma(j_1)-\gamma(j_1)|\,|\widehat\gamma(j_2)-\gamma(j_2)|\, \Big| \frac{1}{T}\sum _{t=j_1+1}^T  \sum _{s=j_2+1}^T K\Big(\frac{t-s}{b_T}\Big)\Big|. 
	\end{align}
	While the last summand is of order $O(b_T\, T^{-1})$ (see (E.4) in \citeasnoun{Zhang_etal2022}), the two middle terms can be bounded from above by 
	$
	O(b_T\, T^{-1/2})
	$
	using Cauchy-Schwarz inequality. Both bounds hold uniformly in $j_1$ and $j_2$.
Let $Y_{t,j}=X_tX_{t-j} - \gamma(j)$ and
	\[
	 \widetilde{\sigma}_T(j_1,j_2) = \frac{1}{T}\sum_{t=j_1+1}^T\sum_{s=j_2+1}^T K\Big(\frac{t-s}{b_T}\Big) Y_{t,j_1} Y_{s,j_2}.
	 \]
	Then the first term on the right hand side of the bound given in (\ref{eq.Boundsigma}) equals $ |\widetilde{\sigma}_T(j_1,j_2) -\sigma(j_1,j_2) |$,  and for this we have  
 \begin{align} \label{eq.sigma12}
 |\widetilde{\sigma}(j_1,j_2) -\sigma(j_1,j_2) | & \leq \Big|\frac{1}{T}\sum_{t=j_1+1}^T\sum_{s=j_2+1}^T \Big(K\Big(\frac{t-s}{b_T}\Big)-1\Big) E(Y_{t,j_1}Y_{s,j_2}) \Big|\nonumber \\
 & + \Big|\frac{1}{T}\sum_{t=j_1+1}^T\sum_{s=j_2+1}^T \Big( Y_{t,j_1}Y_{s,j_2}-E(Y_{t,j_1}Y_{s,j_2}) \Big)K\Big(\frac{t-s}{b_T}\Big)  \Big|.
 \end{align}
Using Lemma~\ref{l.cov}(iii)  
 the first term of the right hand side of  (\ref{eq.sigma12}) can be bounded by 
\begin{align*}
C\frac{1}{T}\sum_{t=1}^T\sum_{s=1}^T & \Big(1-K\Big(\frac{t-s}{b_T}\Big) \Big)\frac{1}{(1+|t-s|)^{\alpha}}  \leq 2C\sum_{s=0}^{\infty} 
\Big(1-K\Big(\frac{s}{b_T}\Big) \Big) \frac{1}{(1+s)^{\alpha}}.
\end{align*}
	Let $ S= b_T $.  Using $ |1-K(s/b_T)| \leq  \sup_{u\in [0,1] } |K^{\prime}(u)| s/b_T$ for $ 0\leq s\leq S$ and $ |1-K(s/b_T)| \leq 1$ for $ s\geq S+1$, we get 
\begin{align*}
\sum_{s=0}^{\infty}  \Big(1- & K\Big(\frac{s}{b_T}\Big) \Big) \frac{1}{(1+s)^{\alpha}} \\
& \leq  \frac{\sup_{u\in[0,1]}|K^{\prime}(u)|}{b_T} \sum_{s=0}^S \frac{1}{(1+s)^{\alpha-1}} +
\sum_{s=S+1}^\infty  \frac{1}{(1+s)^{\alpha}},
\end{align*}
where the first term is $ O(b_T^{-1})$ because   $ \alpha >2$. For the second term we get  
\begin{align*}
\sum_{s=S+1}^\infty  \frac{1}{(1+s)^{\alpha}} & \leq \int_{S+1}^\infty \frac{1}{x^{\alpha}}dx =\frac{1}{(\alpha-1)(S+1)^{\alpha-1}} =O(b_T^{-1}).
\end{align*} 
Hence  
\[ \max_{1\leq j_1,j_2 \leq M_T}\Big|\frac{1}{T}\sum_{t=j_1+1}^T\sum_{s=j_2+1}^T \Big(K\Big(\frac{t-s}{b_T}\Big)-1\Big) E(Y_{t,j_1}Y_{s,j_2}) \Big| =O(b_T^{-1}).\]

Consider  next the second term  of the bound given in  (\ref{eq.sigma12}) and observe that this term can be   bounded by
\begin{equation} \label{eq.Term2.1}
 M_T^{8/m}\max_{1\leq j_1,j_2\leq M_T}  \Big\|\frac{1}{T}\sum_{t=j_1+1}^T\sum_{s=j_2+1}^T \Big( Y_{t,j_1}Y_{s,j_2}-E(Y_{t,j_1}Y_{s,j_2}) \Big)K\Big(\frac{t-s}{b_T}\Big)  \Big\|_{m/4} ,
\end{equation}
where
\begin{align*}
\Big\|\frac{1}{T}\sum_{t=j_1+1}^T\sum_{s=j_2+1}^T \Big(  & Y_{t,j_1}Y_{s,j_2}-  E(Y_{t,j_1}Y_{s,j_2}) \Big)K\Big(\frac{t-s}{b_T}\Big)  \Big\|_{m/4} \\
\leq &  \frac{1}{T}\sum_{\ell=0}^{T-1} K(\ell/b_T)\big\|\sum_{t=1}^{T-\ell} ( Y_{t,j_2}Y_{t+\ell,j_1}-E(Y_{t,j_2}Y_{t+\ell,j_1}) ) \big\|_{m/4} \\
& \ \  +  \frac{1}{T}\sum_{\ell=1}^{T-1} K(\ell/b_T)\big\|\sum_{s=1}^{T-\ell} ( Y_{s,j_1}Y_{s+\ell,j_2}-E(Y_{s,j_1}Y_{s+\ell,j_2}) ) \big\|_{m/4}.
\end{align*}
Recall that for  $s\geq  0$, $ {\mathcal F}_{r,s} $ denotes the $\sigma$-algebra generated by the set of random variables $ \{e_r,e_{r-1}, \ldots, e_{r-s} \}$. 
Note, that by Assumption A.1 for some measurable function $H$,
$ Y_{t,j_2}Y_{t+\ell,j_1} = H(e_{t+\ell}, e_{t+\ell-1}, \ldots)$  and that the $ e_t$'s are i.i.d.. Then  
\begin{align*}
 Y_{t,j_2}Y_{t+\ell,j_1}  & -E(Y_{t,j_2}Y_{t+\ell,j_1})  = E( Y_{t,j_2}Y_{t+\ell,j_1} | {\mathcal F}_{t+\ell, \infty} )- E(Y_{t,j_2}Y_{t+\ell,j_1}) \\
& = \lim_{q\rightarrow\infty} E( Y_{t,j_2}Y_{t+\ell,j_1} | {\mathcal F}_{t+\ell, q}) - E(Y_{t,j_2}Y_{t+\ell,j_1}) \\
& = E( Y_{t,j_2}Y_{t+\ell,j_1} | {\mathcal F}_{t+\ell, 0} )- E(Y_{t,j_2}Y_{t+\ell,j_1})  \\
 & \ \ \  \  + \sum_{r=1}^\infty \big\{ E( Y_{t,j_2}Y_{t+\ell,j_1} | {\mathcal F}_{t+\ell, r}) - E( Y_{t,j_2}Y_{t+\ell,j_1} | {\mathcal F}_{t+\ell, r-1} )\big\},
\end{align*}
a.s. 
Therefore,
\begin{align*}
\big\|\sum_{t=1}^{T-\ell} ( & Y_{t,j_2}Y_{t+\ell,j_1}-E(Y_{t,j_2}Y_{t+\ell,j_1}) ) \big\|_{m/4}\\
\leq & \big\|\sum_{t=1}^{T-\ell} \big\{E( Y_{t,j_2}Y_{t+\ell,j_1} | {\mathcal F}_{t+\ell, 0} )- E(Y_{t,j_2}Y_{t+\ell,j_1}) \big\}  \big\|_{m/4}\\
& +  \sum_{r=1}^\infty \big\|\sum_{t=1}^{T-\ell}\big\{ E( Y_{t,j_2}Y_{t+\ell,j_1} | {\mathcal F}_{t+\ell, r}) - E( Y_{t,j_2}Y_{t+\ell,j_1} | {\mathcal F}_{t+\ell, r-1} )\big\}\big\|_{m/4}\\
 = & S_{1,n} + S_{2,n},
\end{align*}
with an obvious notation for $ S_{1,n}$ and $S_{2,n}$.
Consider $S_{1,n}$ and observe that  $ E( Y_{t,j_2}Y_{t+\ell,j_1} | {\mathcal F}_{t+\ell, 0} ) =\widetilde{g}(e_{t+\ell})$ and  therefore,  
$E( Y_{t,j_2}Y_{t+\ell,j_1} | {\mathcal F}_{t+\ell, 0} )$ and \\ $  E( Y_{s,j_2}Y_{s+\ell,j_1} | {\mathcal F}_{s+\ell, 0} ) $ are independent for $ t\neq s$. Hence,
\begin{align*}
S_{1,n} &\leq C \sqrt{\sum_{t=1}^{T-\ell} \big\|\big\{E( Y_{t,j_2}Y_{t+\ell,j_1} | {\mathcal F}_{t+\ell, 0} )- E(Y_{t,j_2}Y_{t+\ell,j_1}) \big\}  \big\|_{m/4}}\\
& \leq C\sqrt{T} \big\| E( Y_{t,j_2}Y_{t+\ell,j_1} | {\mathcal F}_{t+\ell, 0} )- E(Y_{t,j_2}Y_{t+\ell,j_1}) \big\|_{m/4}.
\end{align*}
To bound the term  $S_{2,n}$ define first 
\[  W_u =\sum_{t=T-\ell-u+1}^{T-\ell} \big\{ E( Y_{t,j_2}Y_{t+\ell,j_1} | {\mathcal F}_{t+\ell, r}) - E( Y_{t,j_2}Y_{t+\ell,j_1} | {\mathcal F}_{t+\ell, r-1} ) \big\} \]
and denote by ${\mathcal A}_u$ the $\sigma$-algebra generated by the set  $ \{e_{T}, e_{T-1}, \ldots, e_{T-u+1-r}\}$. Notice that $ W_u$ is measurable with respect to ${\mathcal A}_u$ and that $ {\mathcal A}_u\subset {\mathcal A}_{u+1}$. 
Furthermore,
\begin{align*}
E(& W_{u+1}  -W_u | {\mathcal A}_u) \\
& = E\Big[ E\big(Y_{T-\ell-u,j_2}Y_{T-u,j_1}\vert {\mathcal F}_{T-u,r}\big) -  E\big(Y_{T-\ell-u,j_2}Y_{T-u,j_1}\vert {\mathcal F}_{T-u,r-1}\big)\Big\vert {\mathcal A}_u\Big]\\
& = E\big(Y_{T-\ell-u,j_2}Y_{T-u,j_1}\vert {\mathcal F}_{T-u,r-1}\big) -  E\big(Y_{T-\ell-u,j_2}Y_{T-u,j_1}\vert {\mathcal F}_{T-u,r-1}\big)\\
& = 0.
\end{align*}
Since $ W_u $ forms a martingale we get 
\[ 
\|W_{T-\ell}\|_{m/4} \leq C\sqrt{ \sum_{t=1}^{T-\ell} \|E(Y_{t,j_2}Y_{t+\ell,j_1}|{\mathcal F}_{t+\ell,r}) -E(Y_{t,j_2}Y_{t+\ell,j_1}|{\mathcal F}_{t+\ell,r-1}) \|^2_{m/4}}.
\]
Recall that $ Y_{t,j_2}Y_{t+\ell,j_1} = H(e_{t+\ell}, e_{t+\ell-1}, \ldots)$ for some measurable function $H$ and let
\[ 
Y_{t,j_2}(r-\ell)Y_{t+\ell,j_1}(r) = H(e_{t+\ell}, e_{t+\ell-1}, \ldots, e_{t+\ell-r+1}   , e^{\prime}_{t+\ell-r}, e_{t+l-r-1}, \ldots  ),
\]
where $ \{e_t^{\prime},t\in \Z \}$ is an independent copy of $ \{e_t,t\in \Z\}$.
Then,
\begin{align*} 
\|E(Y_{t,j_2}Y_{t+\ell,j_1}| {\mathcal F}_{t+\ell,r})&  - E(Y_{t,j_2}Y_{t+\ell,j_1}| {\mathcal F}_{t+\ell,r-1})\|_{m/4}\\
& = \|E(Y_{t,j_2}Y_{t+\ell,j_1}| {\mathcal F}_{t+\ell,r})  - E(Y_{t,j_2}(r-\ell)Y_{t+\ell,j_1}(r)| {\mathcal F}_{t+\ell,r})\|_{m/4}\\
&\leq \| Y_{t,j_2}Y_{t+\ell,j_1}-  Y_{t,j_2}(r-\ell)Y_{t+\ell,j_1}(r)\|_{m/4},
\end{align*} 
that is, 
\[  
\|W_{T-\ell}\|_{m/4} \leq C\sqrt{T}  \|Y_{t,j_2}Y_{t+\ell,j_1}-  Y_{t,j_2}(r-\ell)Y_{t+\ell,j_1}(r)\|_{m/4}.
\]
Hence,
\begin{align*}
S_{2,n}=&\sum _{r=1}^{\infty} \| W_{T-\ell} \| _{m/4} \\
\leq &\ C\sqrt{T} \sum_{r=1}^{\infty}   \|Y_{t,j_2}Y_{t+\ell,j_1}  -  Y_{t,j_2}(r-\ell)Y_{t+\ell,j_1}(r)\|_{m/4} \\
\leq & C\sqrt{T} \Big\{  \sum_{r=1}^{\infty}   \|Y_{t+\ell,j_1}\|_{m/2} \|  Y_{t,j_2}(r-\ell) - Y_{t,j_2}\|_{m/2}\\
& +  \sum_{r=1}^{\infty}   \|Y_{t,j_2} (r-\ell)\|_{m/2}  \| Y_{t+\ell,j_1}(r) - Y_{t+\ell,j_1}\|_{m/2} \Big\}
 \ \leq \ C\sqrt{T} , 
\end{align*}
since 
\begin{align*}
 \|  Y_{t,j_2}(r-\ell) - Y_{t,j_2}\|_{m/2} & = \| X_t(r-\ell)X_{t-j_2}(r-\ell-j_2) - X_{t}X_{t-j_2}\|_{m/2}\\
  &  \leq  \| X_{t}(r-\ell)-X_t\|_m\|X_{t-j_2}(r-\ell-j_2) \|_m \\
  & \ \  + \| X_t \|_m \| X_{t-j_2}(r-\ell-j_2) -X_{t-j_2} \|_m\\
  & \leq C(r-\ell)^{\alpha}
\end{align*}
and an analogue argument for $ \| Y_{t+\ell,j_1}(r) - Y_{t+\ell,j_1}\|_{m/2}$.  Therefore,
\begin{align*}
&\Big\|\frac{1}{T}\sum_{t=j_1+1}^T\sum_{s=j_2+1}^T 
\Big(   Y_{t,j_1}Y_{s,j_2}-  E(Y_{t,j_1}Y_{s,j_2}) \Big)K\Big(\frac{t-s}{b_T}\Big)  \Big\|_{m/4} \\
\leq & \ \frac{1}{T}\sum_{\ell=0}^{T-1}K(\ell/b_T) \|\sum_{t=1}^{T-\ell} Y_{t,j_2}Y_{t+\ell,j_1} - E(Y_{t,j_2}Y_{t+\ell,j_1})\|_{m/4} \\
& + \frac{1}{T}\sum_{\ell=0}^{T-1}K(\ell/b_T) \|\sum_{t=1}^{T-\ell} Y_{t,j_1}Y_{t+\ell,j_2} - E(Y_{t,j_2}Y_{t+\ell,j_2})\|_{m/4} \\
\leq & \ CT^{-1/2} \sum_{\ell=0}^{T-1}K(\ell/b_T)\\
\leq & \ C(T^{-1/2}\big( K(0)+\sum_{\ell=1}^{\infty} K(\ell/b_T)\big)\\
\leq & \ CT^{-/2}\big( K(0) + \int_0^\infty K(x/b_T)dx\big)\\
\leq  & \ Cb_T/\sqrt{T}.
\end{align*}
From this and  recalling (\ref{eq.Term2.1}) we conclude that 
\[ \Big|\frac{1}{T}\sum_{t=j_1+1}^T\sum_{s=j_2+1}^T \Big( Y_{t,j_1}Y_{s,j_2}-E(Y_{t,j_1}Y_{s,j_2}) \Big)K\Big(\frac{t-s}{b_T}\Big)  \Big| = O_{P}\big(M_T^{8/m}b_T/\sqrt{T}\big).\]
\end{proof}

\begin{proof}[Proof of  Theorem~\ref{th.MPBoot}]
 Let  $ \widetilde{\xi}_k$, $k=1,2, \ldots, N_T,$  be Gaussian random variables  as in   Theorem~\ref{t.conf-bands}.  By the triangular inequality, 
\begin{align}
& \sup _{x\in \mathbb R} \Big\vert P\big( \max _{k=1,\ldots , N_T} 
		\sqrt{\frac{T}{M_T}}  \frac{\vert \widehat f_T(\lambda_{k,T})   - E\widehat f_T(\lambda_{k,T})\vert }{\widehat f_T(\lambda_{k,T})} \le x \big) 
			 -  P^\ast\big(   \max _{k=1,\ldots , N_T} \vert \xi^\ast_k \vert \le x \big) \Big\vert \nonumber \\
& \leq 	\sup _{x\in \mathbb R} \Big\vert P\big( \max _{k=1,\ldots , N_T} 
		\sqrt{\frac{T}{M_T}}  \frac{\vert \widehat f_T(\lambda_{k,T})   - E\widehat f_T(\lambda_{k,T})\vert }{\widehat f_T(\lambda_{k,T})} \le x \big) 
			-  P\big(   \max _{k=1,\ldots , N_T} \vert \widetilde{\xi}_k \vert \le x \big) \Big\vert \nonumber \\
& + \sup _{x\in \mathbb R} \Big\vert  P\big(   \max _{k=1,\ldots , N_T} \vert\widetilde{\xi}_k \vert \le x \big)  -  P^\ast\big(   \max _{k=1,\ldots , N_T} \vert \xi^\ast_k \vert \le x \big) \Big\vert.	
\end{align}
The first term is bounded  by   Theorem~\ref{t.conf-bands}.  To handle  the second term, we first show that 
\begin{align} \label{eq.DeltaBould}
&\Delta_T:=\max_{1\leq k_1,k_2\leq N_T}\big|  \widehat{C}_T(k_1,k_2)- C_T(k_1,k_2)\big|  \nonumber \\
 = & \ O_P\Big( \max_{k=1,\ldots , N_T} |\widehat{f}_T(\lambda_{k,T}) - f(\lambda_{k,T})| + \frac{b_TM_T}{\sqrt T}+   \frac{M_T}{b_T} \, +\,  M_Tb_T T^{8 a_s/m-1/2} \Big). 
\end{align}
We have
\begin{align*}
\Delta_T \leq &  \max_{1\leq k_1, k_2 \leq N_T} \Big| \Big(\frac{1}{\widehat{f}_T(\lambda_{k_1,T})\widehat{f}(\lambda_{k_2,T})} - \frac{1}{f_T(\lambda_{k_1,T})f(\lambda_{k_2,T})}\Big)\\
& \times\sum_{j_1=1}^{M_T} \sum_{j_2=1}^{M_T} a_{k_1,j_1}a_{k_2,j_2} \sigma_T(j_1,j_2)\Big| 
+  \max_{1\leq k_1, k_2 \leq N_T}\Big| \frac{1}{\widehat{f}_T(\lambda_{k_1,T})\widehat{f}(\lambda_{k_2,T})}\\
& \times  \sum_{j_1=1}^{M_T}\sum_{j_2=1}^{M_T} a_{k_1,j_1}a_{k_2,j_2}\big( \widehat{\sigma}_T(j_1,j_2) - \sigma_T(j_1,j_2)\big)\Big|\\
= &\ \Delta_{1,T} + \Delta_{2,T},
\end{align*}
with an obvious notation for $ \Delta_{1,T}$ and $ \Delta_{2,T}$.

For $ \Delta_{1,T}$ notice that by the boundedness of the spectral density $f$ and Assumption 2 
we get that
\begin{align*}
 &  \max_{1\leq k_1,k_2\leq N_T}  \Big|\frac{1}{\widehat{f}_T(\lambda_{k_1,T})\widehat{f}(\lambda_{k_2,T})} - \frac{1}{f_T(\lambda_{k_1,T})f(\lambda_{k_2,T})}\Big| \\
    \leq \ &  \Big(\frac{1}{\min_{1\leq k\leq N_T}\widehat{f}_T(\lambda_{k,T}) } \Big)^2\Big(\frac{1}{\min_{1\leq k\leq N_T}f_T(\lambda_{k,T}) }\Big)^2\\
   & \ \  \times \big( \max_{1\leq k\leq N_T}f(\lambda_{k,T})  +  \max_{1\leq k\leq N_T}\widehat{f}(\lambda_{k,T}) \big)    
   \max_{1\leq k \leq N_T}\big|\widehat{f}_T(\lambda_{k,T}) - f(\lambda_{k,T})\big|  \\
   &=  {\mathcal O}_P(    \max_{1\leq k\leq N_T}\big|\widehat{f}_T(\lambda_{k,T}) - f(\lambda_{k,T})\big|  ).
\end{align*}
Furthermore,  recalling the definition of the Gaussian random variables $ \xi_k$, we have 
\begin{align*}
& \max_{1\leq k_1,k_2\leq N_T} \Big|\sum_{j_1=1}^{M_T} \sum_{j_2=1}^{M_T} a_{k_1,j_1}a_{k_2,j_2} \sigma_T(j_1,j_2)\Big| 
 =  \max_{1\leq k_1,k_2\leq N_T} 
| E(\xi_{k_1}, \xi_{k_2})| \\
\leq &\  \big(\max_{1\leq k\leq N_T} \|\xi_k\|_2\big)^2 ={\mathcal O}(1);
\end{align*} 
see Remark 1. Hence $ \Delta_{1,T} ={\mathcal O}_P\big(     \max_{1\leq k \leq N_T}\big|\widehat{f}_T(\lambda_{k,T}) - f(\lambda_{k,T})\big|\big)$.

For $ \Delta_{2,T}$ we have 
\begin{align*}
\Delta_{2,T} & \leq \max_{1\leq j_1,j_2 \leq M_T}\big|\widehat{\sigma}_T(j_1,j_2) - \sigma_T(j_1,j_2) \Big| \Big( \frac{1}
{\min_{1\leq k \leq N_T}\widehat{f}_T(\lambda_{k,T})} \Big)^2 \\
& \ \ \  \ \times \Big(\max_{1\leq k\leq N_T}  \sum_{j=0}^{M_T} a_{k,j}\Big)^2\\
& = {\mathcal O}_P\Big( \frac{M_Tb_T}{\sqrt{T}} + \frac{M_T}{b_T} + M_Tb_T T^{8a_s /m -1/2}\Big),
\end{align*}
by Proposition~\ref{l.sigma-hat} and the fact that $  \sum_{j=0}^{M_T} a_{k,j} =O(\sqrt{M_T})$ uniformly in $1\leq k \leq N_T$. This establishes  (\ref{eq.DeltaBould}).

Recall next  that  for $ 1\leq k \leq N_T$, 
\[  E|\xi_{k}|^2  = f^2(\lambda_{k,T}) \int_{-1}^1 w^2(u) du +o(1) ,\]
and that $ E|\widetilde{\xi}_k|^2 =E|\xi_{k}|^2/f^2(\lambda_{k,T})$. From these expressions and by Assumption~2 and the boundedness  of $f$  we get  that $  \min_{1\leq k \leq N_T}C_T(k,k)$ and  $\max_{1\leq k\leq N_T}C_T(k,k)$ are, for $T$ large enough, bounded from below and from above, respectively, by positive constants. 
Using  Lemma A.1 of Zhang et al. (2022) we  then get
\begin{align*}
\sup _{x\in \mathbb R} \Big\vert  P\big(   \max _{k=1,\ldots , N_T} \vert\widetilde{\xi}_k \vert \le x \big)  -  & P^\ast\big(   \max _{k=1,\ldots , N_T} \vert \xi^\ast_k \vert \le x \big) \Big\vert   \\
= & {\mathcal O}_P\Big(\frac{\Delta_T^{1/6}}{(1+\log(N_T))^{1/4}} + \Delta_T^{1/3} \log^3(N_T)\Big)\\
= &  o_P(\Delta_T^{1/6}).
\end{align*}


\end{proof}

{\bf Acknowledgements}
The authors are grateful to  the Co-Editor and to two referees for their valuable comments that lead to an   improved  version of this paper. They are also thankful 
to Panagiotis Maouris and Alexander Braumann for  helping carrying out the numerical work of Section~\ref{sec.Sim}.



\end{document}